
\documentclass[11pt]{article}%
\usepackage{amsfonts}
\usepackage{amsmath}
\usepackage{amssymb}
\usepackage{extarrows}
\usepackage{color}
\usepackage[colorlinks]{hyperref}
\usepackage[colorinlistoftodos]{todonotes}
\usepackage{graphicx}%
\setcounter{MaxMatrixCols}{30}
\providecommand{\U}[1]{\protect\rule{.1in}{.1in}}
\providecommand{\U}[1]{\protect\rule{.1in}{.1in}}
\hypersetup{colorlinks=true, linkcolor=cyan, citecolor=green,
filecolor=black, urlcolor=blue }
\newtheorem{theorem}{Theorem}

\newtheorem{definition}[theorem]{Definition}

\newtheorem{lemma}[theorem]{Lemma}

\newtheorem{proposition}[theorem]{Proposition}
\newtheorem{remark}[theorem]{Remark}

\setlength{\textwidth}{155mm} \setlength{\textheight}{217mm}
\setlength{\topmargin}{0mm} \setlength{\oddsidemargin}{2.5mm}
\setlength{\evensidemargin}{-2.5mm}
\newenvironment{proof}[1][Proof]{\noindent\textbf{#1.} }{\ \rule{0.5em}{0.5em}}

\renewcommand{\thefootnote}{\fnsymbol{footnote}}
\begin{document}

\title{Time-Delayed Generalized BSDEs}
\author{Luca Di Persio$^{a}$, Matteo Garbelli$^{a,b}$,  Lucian Maticiuc$^{c}$, Adrian Z\u{a}linescu$^{d,e}$
\bigskip\\{\small $^{a}$ Department of Computer Science, University of Verona,}\\{\small Strada le Grazie, no. 15, Verona, 37134, Italy}\\ {\small $^{b}$ Department of Mathematics, University of Trento,}\\{\small Via Sommarive 14, Povo (Trento), 38123, Italy}\\{\small $^{c}$ Faculty of Mathematics, \textquotedblleft Alexandru Ioan
Cuza\textquotedblright\ University,}\\{\small Carol I Blvd., no. 11, Ia\c{s}i, 700506, Romania}\\{\small $^{d}$ Faculty of Computer Science, \textquotedblleft Alexandru Ioan
Cuza\textquotedblright\ University,}\\{\small Carol I Blvd., no. 11, Ia\c{s}i, 700506, Romania}\\{\small $^{e}$ \textquotedblleft Octav Mayer \textquotedblright Mathematics
Institute of the Romanian Academy,}\\{\small Carol I Blvd., no. 8, Ia\c{s}i, 700506, Romania}}
\date{}
\maketitle

\begin{abstract}
We prove the existence and uniqueness of the solution of a BSDE with
time-delayed generators in the small delay setting (or equivalently small Lipschitz constant), which employs the Stieltjes integral with respect to
an increasing continuous stochastic process.  Moreover, we obtain a result of
continuity of the solution with regard to the increasing process, assuming
only uniform convergence, but not in variation. We also prove the existence for an arbitrary 
delay for a specific case
by imposing monotonicity and linearity on generators.
Lastly, we provide an application of the theoretical framework within an insurance based example.

\end{abstract}


\renewcommand{\thefootnote}{\fnsymbol{footnote}}
\footnotetext{{\scriptsize E-mail addresses: luca.dipersio@univr.it (Luca Di
Persio), lucian.maticiuc@uaic.ro (Lucian Maticiuc), adrian.zalinescu@gmail.com
(Adrian Z\u{a}linescu)}}

\textbf{AMS Classification subjects:} 60H10, 60H30\medskip

\textbf{Keywords or phrases: }generalized backward stochastic differential equations;
time--delayed generators; 
Stieltjes integral;
parameter dependence

\section{Introduction}

\label{Section 1}

Backward stochastic differential equations (BSDEs for short) were introduced
in the linear case by Bismut \cite{bi/78}, as adjoint equations involved in
the control of SDEs. The nonlinear case was considered by Pardoux and Peng
first in \cite{pa-pe/90} and then in \cite{pa-pe/92,pe/91}, where they
established a connection between BSDEs and semilinear parabolic partial
differential equations (PDEs), by the so-called nonlinear Feynman--Kac
formula. It was this kind of applications which triggered an impressive amount
of research on the subject. Concerning parabolic PDEs with Neumann boundary
conditions, Pardoux and Zhang discovered that their solutions can be linked to
BSDEs involving the integral with respect to continuous increasing processes
(Stieltjes integral).

This paper represents a first step in establishing a probabilistic
representation formula of the solutions of delayed path-dependent parabolic
PDEs with Neumann boundary conditions. It consists in studying the well
posedness of the associated BSDEs, \textit{i.e.} existence and uniqueness of
solutions, as well as stability with respect to terminal data and
coefficients. As already shown in \cite{co-di-ma-za/20} for the case of such
PDEs considered on the whole space, the generator of the associated BSDE has
to take into account the delayed-path of its solution. As a result, our
present work is concerned with the following BSDE:%
\begin{equation}
\left\{
\begin{array}
[c]{l}%
dY(t)=-F(t,Y(t),Z(t),Y_t,Z_t)dt-G(t,Y(t),Y_{t})dA(t)\medskip\\
\qquad\qquad+Z(t)dW(t),\quad t\in\left[  0,T\right]  ;\\
Y(T)=\xi,
\end{array}
\right.  \label{BSDE_0}%
\end{equation}
where the generators $F$ and $G$ depend also on the past of the solution
$\left(  Y,Z\right)  $. Here, if $\boldsymbol{x}:[-\delta,T]\rightarrow
\mathbb{R}^{n}$ is a function and $t\in\lbrack0,T],$ $\boldsymbol{x}%
_{t}:[-\delta,0]\rightarrow\mathbb{R}^{n}$ denotes the delayed-path of
$\boldsymbol{x}$, defined as%
\[
\boldsymbol{x}_{t}(\theta):=\boldsymbol{x}(t+\theta),\ \theta\in\lbrack
-\delta,0],
\]
where $\delta>0$ is a fixed delay. The coefficient $A$ is a continuous real
valued increasing process.

We recall that time-delayed BSDEs were first introduced in \cite{de-im/10} and
\cite{de-im/10x} where the authors obtained the existence and uniqueness of
the solution of the time--delayed BSDE
\begin{equation}
Y\left(  t\right)  =\xi+\int_{t}^{T}f(s,Y_{s},Z_{s})ds-\int_{t}^{T}%
Z(s)dW(s),\quad0\leq t\leq T, \label{BSDE time delay_introd 2}%
\end{equation}
where%
\[
Y_{s}:=(Y(r))_{r\in\left[  0,s\right]  }\quad\text{and}\quad Z_{s}%
:=(Z(r))_{r\in\left[  0,s\right]  }\,.
\]
In particular, the aforementioned existence and uniqueness result holds true
if the time horizon $T$ or the Lipschitz constant for the generator $f$ are
sufficiently small.

The motivation behind the introduction of a driving force $dA$ and the corresponding integral goes  beyond the link with PDE and can be traced in actuarial applications since \cite{DeFine, stev}. In the context of insurance, a BSDE such as the one described Equation \eqref{BSDE_0} can be used to model the evolution of an hedging strategy for insurance portfolio over time. In this framework the Riemann-Stieltjes  integral is linked to the sum of claims with respect to an increasing continuous process that models the cumulative distribution of claims.

This paper is organized as follows. In the remaining of this section, we
introduce the notations and set the framework of our problem. In section \ref{sec2} we
derive a result of existence and uniqueness for small delay (or small Lipschitz contant) for BSDE (\ref{BSDE_0}), based on
Banach's fixed point theorem, expressed in Theorem \ref{theorem 2}. Moreover, we provide in Proposition \ref{prop}, the well-posedness result for an arbitrary delay for a specific case assuming monotone (in the delayed term) and linear coefficients.   Section \ref{sec3} is devoted to the problem of stability
of solutions with respect to terminal data $\xi$ and coefficients $F$, $G$ and
$A$. Lastly, in Section \ref{application}, we present an insurance application dealing with a variable annuity investment that suits the theoretical setting. The main difficulty encountered in the article is to prove the convergence of the
solutions of the approximating BSDEs when the increasing process $A$ is
approximated uniformly, but not in variation. In order to tackle this problem,
we use a stochastic variant of Helly-Bray theorem, proved in the Appendix
section, as it may be an interesting result for use in other applications. 

\subsection{Problem setting and notations}

On the Euclidean space $\mathbb{R}^{n}$ we consider the Euclidean norm and
scalar product, denoted by $\left\vert \cdot\right\vert $ and $\left\langle
\cdot,\cdot\right\rangle $, respectively. If $n,k\in\mathbb{N}^{\ast}$,
$\mathbb{R}^{n\times k}$ denotes the space of real $n\times k$-matrices,
equipped with the Frobenius norm (the Euclidean norm when this space is
identified with $\mathbb{R}^{nk}$), denoted as well by $\left\vert
\cdot\right\vert $.

For $s<t$ , $C([s,t];\mathbb{R}^{n})$ represents the set of continuous
functions $\boldsymbol{x}:[s,t]\rightarrow\mathbb{R}^{d}$, endowed with the
$\sup$-norm: $\left\Vert \boldsymbol{x}\right\Vert _{C([s,t];\mathbb{R}^{n}%
)}:=\sup_{r\in\lbrack s,t]}\left\vert \boldsymbol{x}(r)\right\vert $;
$BV([s,t];\mathbb{R}^{n})$ denotes the set of right-continuous functions with
bounded variation $\boldsymbol{\eta}$$:[s,t]\rightarrow\mathbb{R}^{n}$,
\textit{i.e.} with finite total variation. Recall that the total variation of
$\boldsymbol{\eta}$ on $[s,t]$ is defined as%
\[
\mathrm{V}_{s}^{t}(\boldsymbol{\eta}):=\sup%
{\textstyle\sum\nolimits_{i=1}^{n}}
\left\vert \boldsymbol{\eta}(t_{i})-\boldsymbol{\eta}(t_{i-1})\right\vert ,
\]
where the $\sup$ is taken on all the partitions $s=t_{0}<t_{1}<\dots<t_{n}=t$.
The standard norm on $BV([s,t];\mathbb{R}^{n})$ is given by%
\[
\left\Vert \boldsymbol{\eta}\right\Vert _{BV([s,t];\mathbb{R}^{n}%
)}:=\left\vert \boldsymbol{\eta}(s)\right\vert +\mathrm{V}_{s}^{t}%
(\boldsymbol{\eta}).
\]
We will simply denote $C[s,t]$, $BV[s,t]$ instead of $C([s,t];\mathbb{R})$,
$BV([s,t];\mathbb{R})$, respectively.

If $\boldsymbol{x}:[s,t]\rightarrow\mathbb{R}^{n}$ is a Borel-measurable
function and $\boldsymbol{\eta}$$\in BV([s,t];\mathbb{R}^{n})$, by $\int%
_{s}^{t}\left\langle \boldsymbol{x}(r)d\boldsymbol{\eta}(r)\right\rangle $ we
denote the sum%
\[%
{\sum_{i=1}^{n}}
\int_{s}^{t}\left\langle \boldsymbol{x}_{i}(r)d\boldsymbol{\eta}%
_{i}(r)\right\rangle ,
\]
where $\boldsymbol{x}_{1}$, $\dots$, $\boldsymbol{x}_{n}$ and
$\boldsymbol{\eta}_{1}$, $\dots$, $\boldsymbol{\eta}_{n}$ are the components
of $\boldsymbol{x}$, respectively $\boldsymbol{\eta}$, in the case where the
Lebesgue-Stieltjes integrals are well-defined and the sum makes sense.

We fix now the framework of our problem, to be utilized throughout the article.

Let $T>0$ be a finite horizon of time, $d,m\in\mathbb{N}^{\ast}$ and
$\delta\in(0,T]$ a fixed time-delay. Let $\left(  \Omega,\mathcal{F}%
,\mathbb{P}\right)  $ be a complete probability space, $W$ a $d$-dimensional
Brownian motion and $\mathbb{F}=\left\{  \mathcal{F}_{t}\right\}
_{t\in\lbrack0,T]}$ the filtration generated by $W$, augmented by the
null-probability subsets of $\Omega$. The stochastic process $A:\Omega
\times\left[  0,T\right]  \rightarrow\mathbb{R}$ is an increasing $\mathbb{F}%
$-adapted process with $A(0)=0$, $\mathbb{P}$-a.s.

\begin{definition}
Let $p\geq2$ and $\beta\geq0$.

\begin{description}
\item[\textrm{(i)}] $\mathcal{S}^{p,m}$ denotes the space of continuous
$\mathbb{F}$--progressively measurable processes $Y:\Omega\times\left[
0,T\right]  \rightarrow\mathbb{R}^{m}$ such that
\[
\mathbb{E}\left[  \sup\nolimits_{0\leq s\leq T}|Y(s)|^{p}\right]  <+\infty\,.
\]

\item[\textrm{(ii)}] $\mathcal{S}_{\beta}^{p,m}$ denotes the space of
continuous $\mathbb{F}$--progressively measurable processes $Y:\Omega
\times\left[  0,T\right]  \rightarrow\mathbb{R}^{m}$ such that
\[
\mathbb{E}\left[  \sup\nolimits_{0\leq s\leq T}e^{\beta A(s)}|Y(s)|^{p}%
\right]  +\mathbb{E}\left[  \int_{0}^{T}e^{\beta A(s)}|Y(s)|^{2}dA(s)\right]
^{p/2}<+\infty\,.
\]

\item[\textrm{(iii)}] $\mathcal{H}_{\beta}^{p,m\times d}$ denotes the space of
$\mathbb{F}$--progressively measurable processes $Z:\Omega\times\left[
0,T\right]  \rightarrow\mathbb{R}^{m\times d}$ such that%
\[
\mathbb{E}\left[  \int_{0}^{T}e^{\beta A(s)}|Z(s)|^{2}ds\right]
^{p/2}<+\infty\,.
\]

\end{description}
\end{definition}

Instead of $\mathcal{H}_{0}^{p,m\times d}$ we will write $\mathcal{H}%
^{p,m\times d}$. The space $\mathcal{S}_{\beta}^{p,m}\times\mathcal{H}_{\beta
}^{p,m\times d}$ (in fact, its quotient with respect to $\mathbb{P\times P}%
dt$-a.e. equality) is naturally equipped with the following norm
\begin{multline*}
\Vert(Y,Z)\Vert_{p,\beta}^{p}=\mathbb{E}\left[  \sup\nolimits_{0\leq s\leq
T}e^{\beta A(s)}|Y(s)|^{p}\right]  +\mathbb{E}\left[  \int_{0}^{T}e^{\beta
A(s)}|Y(s)|^{2}dA(s)\right]  ^{p/2}\\
+\mathbb{E}\left[  \int_{0}^{T}e^{\beta A(s)}|Z(s)|^{2}ds\right]  ^{p/2}.
\end{multline*}

\section{Existence and uniqueness}
\label{sec2}

We consider the following BSDE%
\begin{multline}
Y(t)=\xi+\int_{t}^{T}F(s,Y(s),Z(s),Y_{s},Z_{s})ds+\int_{t}^{T}G(s,Y(s),Y_{s}%
)dA(s)\\
-\int_{t}^{T}Z(s)dW(s),\quad t\in\left[  0,T\right]  , \label{BSDE}%
\end{multline}
with $\xi\in L^{2}\left(  \Omega,\mathcal{F}_{T},\mathbb{P};\mathbb{R}%
^{m}\right)  $ and the generators $F:\Omega\times\lbrack0,T]\times
\mathbb{R}^{m}\times\mathbb{R}^{m\times d}\times L^{2}\left(  [-\delta
,0];\mathbb{R}^{m}\right)  \times L^{2}([-\delta,0];\mathbb{R}^{m\times
d})\rightarrow\mathbb{R}^{m}$, $G:\Omega\times\lbrack0,T]\times\mathbb{R}%
^{m}\times L^{2}\left(  [-\delta,0];\mathbb{R}^{m}\right)  \rightarrow
\mathbb{R}^{m}$ such that the functions $F\left(  \cdot,y,z,\hat{y},\hat
{z}\right)  $ and $G\left(  \cdot,y,\hat{y}\right)  $ are $\mathbb{F}%
$--progressively measurable, for any $\left(  y,z,\hat{y},\hat{z}\right)
\in\mathbb{R}^{m}\times\mathbb{R}^{m\times d}\times L^{2}\left(
[-\delta,0];\mathbb{R}^{m}\right)  \times L^{2}([-\delta,0];\mathbb{R}%
^{m\times d})$, respectively for any $\left(  y,\hat{y}\right)  \in
\mathbb{R}^{m}\times L^{2}\left(  [-\delta,0];\mathbb{R}^{m}\right)  .$

Recall that, for a function $\boldsymbol{x}:[-\delta,T]\rightarrow
\mathbb{R}^{n}$ and some $t\in\lbrack0,T],$ $\boldsymbol{x}_{t}:[-\delta
,0]\rightarrow\mathbb{R}^{n}$ denotes the delayed-path of $\boldsymbol{x}$,
defined as%
\[
\boldsymbol{x}_{t}(\theta):=\boldsymbol{x}(t+\theta),\ \theta\in\lbrack
-\delta,0].
\]
In order to define $Y_{s}$ and $Z_{s}$ even for $s<\delta$, we prolong by
convention, $Y$ by $Y(0)$ and $Z$ by $0$ on the negative real axis.

In what follows we present the assumptions required in this section. We
suppose that there exist constants $\beta$, $L$, $\tilde{L}>0$, bounded
progressively measurable stochastic processes $K,\tilde{K}:\Omega\times\left[
0,T\right]  \rightarrow\mathbb{R}_{+}$ and $\rho$, $\tilde{\rho}$ probability
measures on $\left(  [-\delta,0],\mathcal{B}\left(  \left[  -\delta,0\right]
\right)  \right)  $ such that:

\begin{description}
\item[$\mathrm{(A}_{0}\mathrm{)}$] $\mathbb{E}\left[  e^{\beta A(T)}\left(
1+\left\vert \xi\right\vert ^{2}\right)  \right]  <+\infty$;

\item[$\mathrm{(A}_{1}\mathrm{)}$] $\displaystyle\mathbb{E}\left[  \int%
_{0}^{T}e^{\beta A(t)}\left\vert F\left(  t,0,0,0,0\right)  \right\vert
^{2}dt+\int_{0}^{T}e^{\beta A(t)}\left\vert G\left(  t,0,0\right)  \right\vert
^{2}dA(t)\right]  <+\infty$.

\item[$\mathrm{(A}_{2}\mathrm{)}$] for any $t\in\lbrack0,T]$, $\left(
y,z\right)  ,(y^{\prime},z^{\prime})\in\mathbb{R}^{m}\times\mathbb{R}^{m\times
d}$, $\hat{y},\hat{y}^{\prime}\in L^{2}\left(  [-\delta,0];\mathbb{R}%
^{m}\right)  $ and $\hat{z},\hat{z}^{\prime}\in L^{2}([-\delta,0];\mathbb{R}%
^{m\times d})$, we have%
\[%
\begin{array}
[c]{rl}%
\left(  i\right)  & \displaystyle|F(t,y,z,\hat{y},\hat{z})-F(t,y^{\prime
},z^{\prime},\hat{y},\hat{z})|\leq L(|y-y^{\prime}|+|z-z^{\prime}|)\text{,
}\mathbb{P}\text{-a.s.;}\medskip\\
\left(  ii\right)  & \displaystyle|F(t,y,z,\hat{y},\hat{z})-F(t,y,z,\hat
{y}^{\prime},\hat{z}^{\prime})|^{2}\medskip\\
& \displaystyle\quad\leq K\left(  t\right)  \int_{-\delta}^{0}\left(
\left\vert \hat{y}(\theta)-\hat{y}^{\prime}(\theta)\right\vert ^{2}+\left\vert
\hat{z}(\theta)-\hat{z}^{\prime}(\theta)\right\vert ^{2}\right)  \rho
(d\theta)\text{, }\mathbb{P}\text{-a.s.;}%
\end{array}
\]

\item[$\mathrm{(A}_{3}\mathrm{)}$] for any $t\in\lbrack0,T]$, $y,y^{\prime}%
\in\mathbb{R}^{m}$ and $\hat{y},\hat{y}^{\prime}\in L^{2}\left(
[-\delta,0];\mathbb{R}^{m}\right)  $, we have%
\[%
\begin{array}
[c]{rl}%
\left(  i\right)  & \displaystyle|G(t,y,\hat{y})-G(t,y^{\prime},\hat{y}%
)|\leq\tilde{L}|y-y^{\prime}|\text{, }\mathbb{P}\text{-a.s.;}\medskip\\
\left(  ii\right)  & \displaystyle|G(t,y,\hat{y})-G(t,y,\hat{y}^{\prime}%
)|^{2}\leq\tilde{K}\left(  t\right)  \int_{-\delta}^{0}\left\vert \hat
{y}(\theta)-\hat{y}^{\prime}(\theta)\right\vert ^{2}\tilde{\rho}%
(d\theta)\text{, }\mathbb{P}\text{-a.s.;}%
\end{array}
\]

\end{description}




\begin{remark}
Let us underline that latter conditions differ from those 
used in \cite{de-im/10}, since we allow $T$\ to be arbitrary, but different from the delay $\delta
\in\left[  0,T\right]  $. 
This allows to separate the Lipschitz
constant $L$\ w.r.t. $\left(  y,z\right)  $\ from the Lipschitz
constant $K$\ w.r.t. $\left(  \hat{y},\hat{z}\right)  $; therefore
the restriction on the coefficients can avoid the constant $L$.
\end{remark}

\begin{remark}
\label{constraint}Existence and uniqueness of a solution
to the backward system (\ref{BSDE}) will be proved exploiting a standard Banach's fixed point argument which requires
$K$ or $\delta$ to be small enough.

More precisely, by denoting $K_{1}:=\sup\limits_{s\in\lbrack0,T]}K(s)$,
$\tilde{K}_{1}:=\sup\limits_{s\in\lbrack0,T]}\tilde{K}(s)$ and%
\[
\mathbf{\omega}_{\delta}:=\sup_{t\in\lbrack0,T-\delta]}\left(  A(t+\delta
)-A(t)\right)  ,
\]
we will assume the existence of a positive constant $c<c_{\beta,\tilde{L}%
}:=\min\left\{  \frac{\beta^{2}-8\tilde{L}^{2}}{4\beta^{2}},\frac{1}%
{584}\right\}  \ $such that

\begin{description}
\item[$\mathrm{(H}_{1}\mathrm{)}$] $K_{1}\cdot\max\left\{  1,T\right\}
\cdot\frac{e^{(8L^{2}+\frac{1}{2})\delta+\beta\mathbf{\omega}_{\delta}}%
}{4L^{2}}\leq c,\quad\mathbb{P}$-a.s.;

\item[$\mathrm{(H}_{2}\mathrm{)}$] $4\tilde{K}_{1}\cdot A(T)\cdot
\frac{e^{(8L^{2}+\frac{1}{2})\delta+\beta\mathbf{\omega}_{\delta}}}{\beta}\leq
c,\quad\mathbb{P}$-a.s.
\end{description}
\end{remark}

Our first result states existence and uniqueness of equation (\ref{BSDE}).

\begin{theorem}
\label{theorem 2}Let us assume that $\mathrm{(A}_{0}\mathrm{)}$--$\mathrm{(A}%
_{3}\mathrm{)}$ hold true and $\beta>2\sqrt{2}\tilde{L}$. If conditions
$\mathrm{(H}_{1}\mathrm{)}$ and $\mathrm{(H}_{2}\mathrm{)}$ are satisfied then
there exists a unique solution $\left(  Y,Z\right)  \in\mathcal{S}_{\beta
}^{2,m}\times\mathcal{H}_{\beta}^{2,m\times d}$ for (\ref{BSDE}).
\end{theorem}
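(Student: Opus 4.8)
The plan is to prove existence and uniqueness via Banach's fixed point theorem on the space $\mathcal{S}_{\beta}^{2,m}\times\mathcal{H}_{\beta}^{2,m\times d}$, exactly as announced in Remark~\ref{constraint}. First I would define a map $\Phi$ that sends a pair $(U,V)\in\mathcal{S}_{\beta}^{2,m}\times\mathcal{H}_{\beta}^{2,m\times d}$ to the solution $(Y,Z)$ of the BSDE with frozen delayed paths, namely
\[
Y(t)=\xi+\int_{t}^{T}F\bigl(s,Y(s),Z(s),U_{s},V_{s}\bigr)ds+\int_{t}^{T}G\bigl(s,Y(s),U_{s}\bigr)dA(s)-\int_{t}^{T}Z(s)dW(s).
\]
For fixed $(U,V)$ this is a classical BSDE driven both by $ds$ and by $dA(s)$ whose generators are Lipschitz in $(y,z)$ (with constants $L$ and $\tilde L$ from $\mathrm{(A}_2\mathrm{)}(i)$, $\mathrm{(A}_3\mathrm{)}(i)$); I would invoke the standard well-posedness theory for BSDEs with respect to $dt+dA(t)$ (in the spirit of Pardoux--Zhang) to get that $\Phi$ is well defined, after checking, using $\mathrm{(A}_0\mathrm{)}$, $\mathrm{(A}_1\mathrm{)}$ and the Lipschitz bounds together with $(U,V)\in\mathcal{S}_{\beta}^{2,m}\times\mathcal{H}_{\beta}^{2,m\times d}$, that the data $\xi$, $F(\cdot,0,0,U_\cdot,V_\cdot)$, $G(\cdot,0,U_\cdot)$ satisfy the required integrability (this is where the choice $\beta>2\sqrt2\tilde L$, i.e. $\beta^2-8\tilde L^2>0$, first enters, to absorb the $dA$-term in the a priori estimate).

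The heart of the argument is the contraction estimate. Given two inputs $(U^1,V^1)$, $(U^2,V^2)$ with images $(Y^1,Z^1)$, $(Y^2,Z^2)$, I would write the equation for the differences $\delta Y=Y^1-Y^2$, $\delta Z=Z^1-Z^2$, $\delta U=U^1-U^2$, $\delta V=V^1-V^2$, apply It\^o's formula to $e^{\beta A(t)}|\delta Y(t)|^2$, and estimate. The Lipschitz-in-$(y,z)$ terms produce $L|\delta Y|+L|\delta Z|$ and $\tilde L|\delta Y|$ contributions which, via Young's inequality with suitably chosen weights, are controlled by the $\beta\,dA$ term (using $\beta^2-8\tilde L^2>0$) and by a fraction of $\int|\delta Z|^2ds$; the delay-dependent terms from $\mathrm{(A}_2\mathrm{)}(ii)$ and $\mathrm{(A}_3\mathrm{)}(ii)$ produce, after using the probability measures $\rho,\tilde\rho$ and Fubini, terms of the form $K_1\int_0^T e^{\beta A(s)}\!\int_{-\delta}^0(|\delta U(s+\theta)|^2+|\delta V(s+\theta)|^2)\rho(d\theta)\,ds$ and the analogous $\tilde K_1$-term against $dA$. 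A change of variables $s+\theta\mapsto r$ turns these into integrals of $|\delta U|^2,|\delta V|^2$ over $[-\delta,T]$ against $dr$ (resp.\ $dA$), and here the factor $e^{\beta(A(r+\cdot)-A(r))}\le e^{\beta\boldsymbol\omega_\delta}$ together with the length $\delta$ of the $\theta$-window and the bounds $\mathrm{(H}_1\mathrm{)}$, $\mathrm{(H}_2\mathrm{)}$ yields a constant strictly less than $1$ in front of $\|(\delta U,\delta V)\|_{2,\beta}^2$. I would also need Burkholder--Davis--Gundy to pass from the $|\delta Y(t)|^2$ estimate to the $\sup_t$-norm and to the $\int|\delta Z|^2ds$ term, which is the role of the numerical constants $\frac{1}{584}$ and the $4L^2$, $\beta$ denominators appearing in $c_{\beta,\tilde L}$ and in $\mathrm{(H}_1\mathrm{)}$--$\mathrm{(H}_2\mathrm{)}$: these absorb the BDG constants and the cross terms so that the overall contraction factor stays below $1$.

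Assembling these, $\Phi$ is a strict contraction on the Banach space $\mathcal{S}_{\beta}^{2,m}\times\mathcal{H}_{\beta}^{2,m\times d}$ endowed with $\|\cdot\|_{2,\beta}$, so it has a unique fixed point $(Y,Z)$, which by construction is the unique solution of~(\ref{BSDE}). The main obstacle I anticipate is the bookkeeping in the contraction estimate: one must simultaneously (a) keep the $dA$-coefficient $\tilde L$ terms under control, which forces $\beta>2\sqrt2\tilde L$, (b) split the many error terms between the $\sup$-part, the $dA$-part and the $ds$-part of the norm with matching constants, and (c) handle the time-shift in the delay terms so that the shifted integrals are dominated by the unshifted norm up to the factor $e^{\beta\boldsymbol\omega_\delta}$ and the small constant $c$. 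Getting all the constants to line up so that the final coefficient is $<1$ under precisely $\mathrm{(H}_1\mathrm{)}$ and $\mathrm{(H}_2\mathrm{)}$ is the delicate part; everything else is standard BSDE technique.
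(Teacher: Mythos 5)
Your proposal is correct and follows essentially the same strategy as the paper: a Banach fixed point argument on $\mathcal{S}_{\beta}^{2,m}\times\mathcal{H}_{\beta}^{2,m\times d}$ with the delayed arguments frozen, It\^o's formula applied to an exponentially weighted square of the difference, Young's inequality to split the Lipschitz and delay contributions, a change of variables plus the factor $e^{\beta\boldsymbol{\omega}_{\delta}}$ to dominate the shifted integrals, and Burkholder--Davis--Gundy to recover the supremum norm, with $\mathrm{(H}_{1}\mathrm{)}$--$\mathrm{(H}_{2}\mathrm{)}$ supplying the smallness that makes the map a strict contraction. The one substantive divergence is in the definition of the map: you keep the current value $Y(s)$ inside $G$, writing $G(s,Y(s),U_{s})$, so your inner equation is a genuine generalized BSDE driven by $ds+dA(s)$ and you must invoke Pardoux--Zhang-type well-posedness; the paper instead freezes that argument too, taking $G(s,U(s),U_{s})$, which turns the $dA$-integral into a known finite-variation process $B(t)$ and reduces the inner equation, after the shift $\hat{Y}=Y+B$, to a classical Pardoux--Peng BSDE. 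The trade-off is that in the paper's scheme the $\tilde{L}$-Lipschitz term lands on the right-hand side of the contraction estimate as $\frac{4\tilde{L}^{2}}{\beta}\int e^{\alpha s+\beta A(s)}|\Delta U(s)|^{2}dA(s)$, which is exactly why $\beta>2\sqrt{2}\tilde{L}$ appears inside the constant $c_{\beta,\tilde{L}}$, whereas in your scheme that condition is consumed earlier, absorbing $2\tilde{L}|\delta Y|^{2}$ into the $\beta\,dA$ term of the inner a priori estimate. Both routes work; the paper's buys an elementary inner step at the price of a slightly more delicate contraction constant, yours buys a cleaner contraction at the price of a heavier inner existence result. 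Note also that the pointwise Lipschitz terms $L(|\delta Y|+|\delta Z|)$ coming from $F$ cannot be absorbed by the $\beta\,dA$ term as your sketch suggests in passing; they require the additional deterministic weight $e^{\alpha t}$ with $\alpha=8L^{2}+\frac{1}{2}$ (or a Gronwall step), which is among the ``suitably chosen weights'' you allude to and which the paper makes explicit.
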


\begin{proof}
The existence and uniqueness will be obtained by the Banach fixed point
theorem.$\medskip$

Let us consider the map $\Gamma:\mathcal{S}_{\beta}^{2,m}\times\mathcal{H}%
_{\beta}^{2,m\times d}\rightarrow\mathcal{S}_{\beta}^{2,m}\times
\mathcal{H}_{\beta}^{2,m\times d}$, defined in the following way: for $\left(
U,V\right)  \in\mathcal{S}_{\beta}^{2,m}\times\mathcal{H}_{\beta}^{2,m\times
d}$, $\Gamma\left(  U,V\right)  =\left(  Y,Z\right)  $, where the couple of
adapted processes $\left(  Y,Z\right)  $ is the solution to the equation%
\begin{multline}
Y(t)=\xi+\int_{t}^{T}F(s,Y(s),Z(s),U_{s},V_{s})ds+\int_{t}^{T}G(s,U(s),U_{s}%
)dA(s)\label{BSDE iterative 1}\\
-\int_{t}^{T}Z(s)dW(s),\quad t\in\left[  0,T\right]  .
\end{multline}
The existence of a unique solution $\left(  Y,Z\right)  \in\mathcal{S}%
^{2,m}\times\mathcal{H}^{2,m\times d}$ is guaranteed by \cite{pa-pe/90}.
Indeed, if we denote%
\begin{align*}
B(t)  &  :=\int_{0}^{t}G(s,U(s),U_{s})dA(s),\quad t\in\lbrack0,T];\\
\hat{F}(t,y,z)  &  :=F(t,y-B(t),z,U_{t},V_{t}),\quad t\in\lbrack0,T],\ \left(
y,z\right)  \in\mathbb{R}^{m}\times\mathbb{R}^{m\times d},
\end{align*}
then $(Y,Z)$ is a solution to equation (\ref{BSDE}) if and only if $(Y+B,Z)$
solves the equation%
\[
\hat{Y}(t)=\xi+B(T)+\int_{t}^{T}\hat{F}(s,\hat{Y}(s),Z(s),U_{s},V_{s}%
)ds-\int_{t}^{T}Z(s)dW(s),\quad t\in\left[  0,T\right]  .
\]
Since $\hat{F}$ is Lipschitz with respect to $(y,z)$, it remains to prove that
$\mathbb{E}\int_{0}^{T}\bigl|\hat{F}(t,0,0)\bigr|^{2}dt<+\infty$ and
$\xi+B(T)\in L^{2}\left(  \Omega,\mathcal{F}_{T},\mathbb{P};\mathbb{R}%
^{m}\right)  $. We have (remember that $K_{1}:=\sup\limits_{s\in\lbrack
0,T]}K(s)$ and $\tilde{K}_{1}:=\sup\limits_{s\in\lbrack0,T]}\tilde{K}(s)$):%
\[%
\begin{array}
[c]{l}%
\displaystyle\mathbb{E}\int_{0}^{T}\bigl|\hat{F}(t,0,0)\bigr|^{2}%
dt=\mathbb{E}\int_{0}^{T}\left\vert F(t,-B(t),0,U_{t},V_{t})\right\vert
^{2}dt\leq3\mathbb{E}\int_{0}^{T}\left\vert F(t,0,0,0,0)\right\vert
^{2}dt\medskip\\
\displaystyle\quad+3L^{2}\mathbb{E}\int_{0}^{T}\left\vert B(t)\right\vert
^{2}dt+3\mathbb{E}\int_{0}^{T}K\left(  t\right)  \int_{-\delta}^{0}\left(
\left\vert U(t+\theta)\right\vert ^{2}+\left\vert V(t+\theta)\right\vert
^{2}\right)  \rho(d\theta)dt\medskip\\
\displaystyle\leq3\mathbb{E}\int_{0}^{T}\left\vert F(t,0,0,0,0)\right\vert
^{2}dt+3L^{2}\mathbb{E}\int_{0}^{T}\left\vert B(t)\right\vert ^{2}dt\medskip\\
\displaystyle\quad+3T\mathbb{E}\left[  K_{1}\sup_{t\in\lbrack0,T]}\left\vert
U(t)\right\vert ^{2}\right]  +3\mathbb{E}K_{1}\int_{0}^{T}\left\vert
V(t)\right\vert ^{2}dt.
\end{array}
\]
Since $\mathrm{(A}_{1}\mathrm{)}$ holds and $K_{1}$ is bounded, we only have
to show that $\mathbb{E}\int_{0}^{T}\left\vert B(t)\right\vert ^{2}dt<+\infty$
and $\mathbb{E}\left\vert B(T)\right\vert ^{2}<+\infty$. We have%
\[%
\begin{array}
[c]{l}%
\displaystyle\mathbb{E}\int_{0}^{T}\left\vert \int_{0}^{t}G(s,U(s),U_{s}%
)dA(s)\right\vert ^{2}dt\medskip\\
\displaystyle\leq\mathbb{E}\int_{0}^{T}\left[  \int_{0}^{t}e^{\beta
A(s)}\left\vert G(s,U(s),U_{s})\right\vert ^{2}dA(s)\cdot\int_{0}^{t}e^{-\beta
A(s)}dA(s)\right]  dt\medskip\\
\displaystyle\leq\frac{T}{\beta}\mathbb{E}\int_{0}^{T}e^{\beta A(t)}\left\vert
G(t,U(t),U_{t})\right\vert ^{2}dA(t)\leq\frac{2T}{\beta}\mathbb{E}\int_{0}%
^{T}e^{\beta A(t)}\left\vert G(t,0,0)\right\vert ^{2}dA(t)\medskip\\
\displaystyle\quad+\frac{2T}{\beta}\mathbb{E}\int_{0}^{T}e^{\beta A(t)}%
L^{2}\left\vert U(t)\right\vert ^{2}dA(t)+\frac{2T}{\beta}\mathbb{E}\int%
_{0}^{T}e^{\beta A(t)}\tilde{K}\left(  t\right)  \int_{-\delta}^{0}\left\vert
U(t+\theta)\right\vert ^{2}\tilde{\rho}(d\theta)dA(t)\medskip\\
\displaystyle\leq\frac{2T}{\beta}\mathbb{E}\int_{0}^{T}e^{\beta A(t)}%
\left\vert G(t,0,0)\right\vert ^{2}dA(t)+\frac{2TL^{2}}{\beta}\mathbb{E}%
\int_{0}^{T}e^{\beta A(t)}\left\vert U(t)\right\vert ^{2}dA(t)\medskip\\
\displaystyle+\frac{2T}{\beta}\mathbb{E}\tilde{K}_{1}A(T)e^{\beta
\mathbf{\omega}_{\delta}}\sup_{t\in\lbrack0,T]}e^{\beta A(t)}\left\vert
U(t)\right\vert ^{2}<+\infty,
\end{array}
\]
by $\mathrm{(A}_{1}\mathrm{)}$ and $\mathrm{(H}_{2}\mathrm{)}$, which proves
the claim (along the way we have also proven that $\mathbb{E}\left\vert
B(T)\right\vert ^{2}<+\infty$).$\medskip$

The proof that $\left(  Y,Z\right)  \in\mathcal{S}_{\beta}^{2,m}%
\times\mathcal{H}_{\beta}^{2,m\times d}$ is very similar to that of
Proposition 1.1 from \cite{pa-zh/98}, so it is left to the reader.$\medskip$

Let us prove that $\Gamma$ is a contraction with respect to the equivalent
norm%
\begin{multline*}
\left\Vert (Y,Z)\right\Vert _{2,\alpha,\beta,a,b}^{2}:=\mathbb{E}%
\big(\sup\nolimits_{t\in\left[  0,T\right]  }e^{\alpha t+\beta A\left(
t\right)  }|Y\left(  t\right)  |^{2}\big)+a\mathbb{E}\int_{0}^{T}e^{\alpha
s+\beta A\left(  s\right)  }|Y\left(  s\right)  |^{2}dA\left(  s\right) \\
+b\mathbb{E}\int_{0}^{T}e^{\alpha s+\beta A\left(  s\right)  }|Z\left(
s\right)  |^{2}ds.
\end{multline*}
where $\alpha:=8L^{2}+\frac{1}{2}$ and the constants $a,b>0$ are yet to be chosen.

Let us consider $\left(  U^{1},V^{1}\right)  ,\left(  U^{2},V^{2}\right)
\in\mathcal{S}_{\beta}^{2,m}\times\mathcal{H}_{\beta}^{2,m\times d}$ and
$\left(  Y^{1},Z^{1}\right)  :=\Gamma\left(  U^{1},V^{1}\right)  $, $\left(
Y^{2},Z^{2}\right)  :=\Gamma\left(  U^{2},V^{2}\right)  $. For the sake of
brevity, we will denote in what follows%
\[%
\begin{array}
[c]{l}%
\Delta F\left(  s\right)  :=F(s,Y^{1}\left(  s\right)  ,Z^{1}\left(  s\right)
,U_{s}^{1},V_{s}^{1})-F(s,Y^{2}\left(  s\right)  ,Z^{2}\left(  s\right)
,U_{s}^{2},V_{s}^{2}),\medskip\\
\Delta G\left(  s\right)  :=G(s,U^{1}\left(  s\right)  ,U_{s}^{1}%
)-G(s,U^{2}\left(  s\right)  ,U_{s}^{2}),\medskip\\
\Delta U\left(  s\right)  :=U^{1}\left(  s\right)  -U^{2}\left(  s\right)
,\quad\Delta V\left(  s\right)  :=V^{1}\left(  s\right)  -V^{2}\left(
s\right)  ,\medskip\\
\Delta Y\left(  s\right)  :=Y^{1}\left(  s\right)  -Y^{2}\left(  s\right)
,\quad\Delta Z\left(  s\right)  :=Z^{1}\left(  s\right)  -Z^{2}\left(
s\right)  .
\end{array}
\]
Exploiting It\^{o}'s formula we have, for any $t\in\left[  0,T\right]  $%
\[%
\begin{array}
[c]{l}%
\displaystyle e^{\alpha t+\beta A\left(  t\right)  }|\Delta Y\left(  t\right)
|^{2}+\int_{t}^{T}e^{\alpha s+\beta A\left(  s\right)  }|\Delta Y\left(
s\right)  |^{2}\left(  \alpha ds+\beta dA\left(  s\right)  \right)  +\int%
_{t}^{T}e^{\alpha s+\beta A\left(  s\right)  }|\Delta Z\left(  s\right)
|^{2}ds\medskip\\
\displaystyle=e^{\alpha T+\beta A\left(  T\right)  }|\Delta Y\left(  T\right)
|^{2}-2\int_{t}^{T}e^{\alpha s+\beta A\left(  s\right)  }\langle\Delta
Y\left(  s\right)  ,\Delta Z\left(  s\right)  dW\left(  s\right)
\rangle\medskip\\
\displaystyle\quad+2\int_{t}^{T}e^{\alpha s+\beta A\left(  s\right)  }%
\langle\Delta Y\left(  s\right)  ,\Delta F(s)\rangle ds+2\int_{t}^{T}e^{\alpha
s+\beta A\left(  s\right)  }\langle\Delta Y\left(  s\right)  ,\Delta G\left(
s\right)  \rangle dA\left(  s\right)  \,.
\end{array}
\]
From assumptions $\mathrm{(A}_{2}\mathrm{)}$--$\mathrm{(A}_{3}\mathrm{)}$ we
obtain,%
\[%
\begin{array}
[c]{l}%
\displaystyle2\Big|\int_{t}^{T}e^{\alpha s+\beta A\left(  s\right)  }%
\langle\Delta Y\left(  s\right)  ,\Delta F(s)\rangle ds\Big|\leq2\int_{t}%
^{T}e^{\alpha s+\beta A\left(  s\right)  }\big|\langle\Delta Y\left(
s\right)  ,\Delta F(s)\rangle\big|ds\medskip\\
\displaystyle\leq8L^{2}\int_{t}^{T}e^{\alpha s+\beta A\left(  s\right)
}|\Delta Y\left(  s\right)  |^{2}ds+\frac{1}{8 L^{2}}\int_{t}%
^{T}e^{\alpha s+\beta A\left(  s\right)  }|\Delta F\left(  s\right)
|^{2}ds\medskip\\
\displaystyle\leq8L^{2}\int_{t}^{T}e^{\alpha s+\beta A\left(  s\right)
}|\Delta Y\left(  s\right)  |^{2}ds+\frac{1}{2}\int_{t}^{T}e^{\alpha s+\beta
A\left(  s\right)  }\left(  |\Delta Y\left(  s\right)  |^{2}+|\Delta Z\left(
s\right)  |^{2}\right)  ds\medskip\\
\displaystyle\quad+\frac{K_{1}T}{4L^{2}}e^{\alpha\delta+\beta\mathbf{\omega
}_{\delta}}\cdot\sup\nolimits_{s\in\left[  0,T\right]  }\big(e^{\alpha s+\beta
A\left(  s\right)  }|\Delta U\left(  s\right)  |^{2}\big)\medskip\\
\displaystyle\quad+\frac{K_{1}}{4L^{2}}e^{\alpha\delta+\beta\mathbf{\omega
}_{\delta}}\cdot\int_{0}^{T}e^{\alpha s+\beta A\left(  s\right)  }|\Delta
V\left(  s\right)  |^{2}ds
\end{array}
\]
and%
\[%
\begin{array}
[c]{l}%
\displaystyle2\Big|\int_{t}^{T}e^{\alpha s+\beta A\left(  s\right)  }%
\langle\Delta Y\left(  s\right)  ,\Delta G(s)\rangle dA\left(  s\right)
\Big|\leq2\int_{t}^{T}e^{\alpha s+\beta A\left(  s\right)  }\big|\langle\Delta
Y\left(  s\right)  ,\Delta G(s)\rangle\big|dA(s)\medskip\\
\displaystyle\leq\frac{\beta}{2}\int_{t}^{T}e^{\alpha s+\beta A\left(
s\right)  }|\Delta Y\left(  s\right)  |^{2}dA\left(  s\right)  +\frac{2}%
{\beta}\int_{t}^{T}e^{\alpha s+\beta A\left(  s\right)  }|\Delta G\left(
s\right)  |^{2}dA\left(  s\right)  \medskip\\
\displaystyle\leq\frac{\beta}{2}\int_{t}^{T}e^{\alpha s+\beta A\left(
s\right)  }|\Delta Y\left(  s\right)  |^{2}dA\left(  s\right)  +\frac
{4\tilde{L}^{2}}{\beta}\int_{t}^{T}e^{\alpha s+\beta A\left(  s\right)
}|\Delta U\left(  s\right)  |^{2}dA\left(  s\right)  \medskip\\
\displaystyle\quad+\frac{4\tilde{K}_{1}\,\,A(T)}{\beta}e^{\alpha\delta
+\beta\mathbf{\omega}_{\delta}}\cdot\sup\nolimits_{s\in\left[  0,T\right]
}\big(e^{\alpha s+\beta A\left(  s\right)  }|\Delta U\left(  s\right)
|^{2}\big).
\end{array}
\]
By $\mathrm{(H}_{1}\mathrm{)}$ and $\mathrm{(H}_{2}\mathrm{)}$, we have%
\begin{align*}
\Big(\tfrac{K_{1}T}{4L^{2}}+\tfrac{4\tilde{K}_{1}\,A(T)}{\beta}\Big)e^{\alpha
\delta+\beta\mathbf{\omega}_{\delta}}  &  \leq2c,\quad\mathbb{P}%
\text{-a.s.;}\\
\tfrac{K_{1}}{4L^{2}}e^{\alpha\delta+\beta\mathbf{\omega}_{\delta}}  &  \leq
c,\quad\mathbb{P}\text{-a.s,}%
\end{align*}
(recall that $\alpha:=8L^{2}+\frac{1}{2}$). Therefore,%
\begin{equation}%
\begin{array}
[c]{l}%
\displaystyle e^{\alpha t+\beta A\left(  t\right)  }|\Delta Y\left(  t\right)
|^{2}+\frac{\beta}{2}\int_{t}^{T}e^{\alpha s+\beta A\left(  s\right)  }|\Delta
Y\left(  s\right)  |^{2}dA\left(  s\right)  \medskip\\
\displaystyle\quad+\frac{1}{2}\int_{t}^{T}e^{\alpha s+\beta A\left(  s\right)
}|\Delta Z\left(  s\right)  |^{2}ds\medskip\\
\displaystyle\leq-2\int_{t}^{T}e^{\alpha s+\beta A\left(  s\right)  }%
\langle\Delta Y\left(  s\right)  ,\Delta Z\left(  s\right)  dW\left(
s\right)  \rangle+\frac{4\tilde{L}^{2}}{\beta}\int_{t}^{T}e^{\alpha s+\beta
A\left(  s\right)  }|\Delta U\left(  s\right)  |^{2}dA\left(  s\right)
\medskip\\
\displaystyle\quad+2c\sup\nolimits_{s\in\left[  0,T\right]  }\big(e^{\alpha
s+\beta A\left(  s\right)  }|\Delta U\left(  s\right)  |^{2}\big)+c\int%
_{0}^{T}e^{\alpha s+\beta A\left(  s\right)  }|\Delta V\left(  s\right)
|^{2}ds.
\end{array}
\label{estimate_DY_DZ}%
\end{equation}
Since $e^{\alpha s+\beta A\left(  s\right)  }\Delta Y\in\mathcal{S}^{2,m}$ and
$\Delta Z\in\mathcal{H}^{2,m\times d}$, one can show that%
\[
\mathbb{E}\left[  \int_{0}^{T}e^{\alpha s+\beta A\left(  s\right)  }%
\langle\Delta Y\left(  s\right)  ,\Delta Z\left(  s\right)  dW\left(
s\right)  \rangle\right]  =0,
\]
hence%
\begin{equation}%
\begin{array}
[c]{l}%
\displaystyle\frac{\beta}{2}\mathbb{E}\int_{0}^{T}e^{\alpha s+\beta A\left(
s\right)  }|\Delta Y\left(  s\right)  |^{2}dA\left(  s\right)  +\frac{1}%
{2}\mathbb{E}\int_{0}^{T}e^{\alpha s+\beta A\left(  s\right)  }|\Delta
Z\left(  s\right)  |^{2}ds\medskip\\
\displaystyle\leq\frac{4\tilde{L}^{2}}{\beta}\mathbb{E}\int_{0}^{T}e^{\alpha
s+\beta A\left(  s\right)  }|\Delta U\left(  s\right)  |^{2}dA\left(
s\right)  +2c\mathbb{E}\left[  \sup\nolimits_{s\in\left[  0,T\right]
}\big(e^{\alpha s+\beta A\left(  s\right)  }|\Delta U\left(  s\right)
|^{2}\big)\right]  \medskip\\
\displaystyle\quad+c\mathbb{E}\left[  \int_{0}^{T}e^{\alpha s+\beta A\left(
s\right)  }|\Delta V\left(  s\right)  |^{2}ds\right]  .
\end{array}
\label{estimate_DY_DZ2}%
\end{equation}
On the other hand, by Burkholder--Davis--Gundy's inequality, we have%
\begin{multline*}
2\mathbb{E}\bigg[\sup\nolimits_{t\in\left[  0,T\right]  }\Big|\int_{t}%
^{T}e^{\alpha s+\beta A\left(  s\right)  }\langle\Delta Y\left(  s\right)
,\Delta Z\left(  s\right)  \rangle dW\left(  s\right)  \Big|\bigg]\\
\leq\frac{1}{2}\mathbb{E}\big(\sup\nolimits_{t\in\left[  0,T\right]
}e^{\alpha t+\beta A\left(  t\right)  }|\Delta Y\left(  t\right)
|^{2}\big)+72\,\mathbb{E}\int_{0}^{T}e^{\alpha s+\beta A\left(  s\right)
}|\Delta Z\left(  s\right)  |^{2}ds.
\end{multline*}
Hence, by (\ref{estimate_DY_DZ}),%
\[%
\begin{array}
[c]{l}%
\displaystyle\frac{1}{2}\mathbb{E}\big(\sup\nolimits_{t\in\left[  0,T\right]
}e^{\alpha t+\beta A\left(  t\right)  }|\Delta Y\left(  t\right)
|^{2}\big)\medskip\\
\displaystyle\leq72\,\mathbb{E}\int_{0}^{T}e^{\alpha s+\beta A\left(
s\right)  }|\Delta Z\left(  s\right)  |^{2}ds+\frac{4\tilde{L}^{2}}{\beta
}\mathbb{E}\int_{0}^{T}e^{\alpha s+\beta A\left(  s\right)  }|\Delta U\left(
s\right)  |^{2}dA\left(  s\right)  \medskip\\
\displaystyle\quad+2c\mathbb{E}\left[  \sup\nolimits_{s\in\left[  0,T\right]
}\big(e^{\alpha s+\beta A\left(  s\right)  }|\Delta U\left(  s\right)
|^{2}\big)\right]  +c\mathbb{E}\left[  \int_{0}^{T}e^{\alpha s+\beta A\left(
s\right)  }|\Delta V\left(  s\right)  |^{2}ds\right]  .
\end{array}
\]
Thus, with $a:=\frac{\lambda\beta}{2}$, $b:=\frac{\lambda}{2}-144$ and some
$\lambda>288$, by taking into account (\ref{estimate_DY_DZ2}), we obtain%
\[%
\begin{array}
[c]{l}%
\displaystyle\mathbb{E}\big(\sup\nolimits_{t\in\left[  0,T\right]  }e^{\alpha
t+\beta A\left(  t\right)  }|\Delta Y\left(  t\right)  |^{2}\big)+a\int%
_{0}^{T}e^{\alpha s+\beta A\left(  s\right)  }|\Delta Y\left(  s\right)
|^{2}dA\left(  s\right)  \medskip\\
\displaystyle\quad+b\mathbb{E}\int_{0}^{T}e^{\alpha s+\beta A\left(  s\right)
}|\Delta Z\left(  s\right)  |^{2}ds\medskip\\
\displaystyle\leq2c(2+\lambda)\,\mathbb{E}\left[  \sup\nolimits_{s\in\left[
0,T\right]  }\big(e^{\alpha s+\beta A\left(  s\right)  }|\Delta U\left(
s\right)  |^{2}\big)\right]  +\frac{4\tilde{L}^{2}}{\beta}(2+\lambda
)\mathbb{E}\int_{0}^{T}e^{\alpha s+\beta A\left(  s\right)  }|\Delta U\left(
s\right)  |^{2}dA\left(  s\right)  \medskip\\
\displaystyle\quad+c(2+\lambda)\,\mathbb{E}\int_{0}^{T}e^{\alpha r+\beta
A\left(  r\right)  }|\Delta V\left(  r\right)  |^{2}dr,
\end{array}
\]

so
\[
\left\Vert (\Delta Y,\Delta Z)\right\Vert _{2,\alpha,\beta,a,b}^{2}\leq
\mu_{\lambda}\left\Vert (\Delta U,\Delta V)\right\Vert _{2,\alpha,\beta
,a,b}^{2},
\]
where%
\[
\mu_{\lambda}:=\max\left\{  c(2+\lambda),\tfrac{8\tilde{L}^{2}(2+\lambda
)}{\lambda\beta^{2}},\tfrac{2c(2+\lambda)}{\lambda-288}\right\}  .
\]
Since $c<c_{\beta,\tilde{L}}$, we can take $\lambda$ slightly bigger than
$\frac{1}{2c_{\beta,\tilde{L}}}-2$, such that $2c(2+\lambda)<1$ and so
$\mu_{\lambda}<1$ (by the definition of $c_{\beta,\tilde{L}}$).

It follows that the application $\Gamma$ is a contraction on the Banach space
$\mathcal{S}_{\beta}^{2,m}\times\mathcal{H}_{\beta}^{2,m\times d}$. Therefore,
by Banach fixed point theorem, there exists a unique fixed point
$(Y,Z)=\Gamma(Y,Z)$ in the space $\mathcal{S}_{\beta}^{2,m}\times
\mathcal{H}_{\beta}^{2,m\times d}$, which completes our proof.\hfill$\medskip$
\end{proof}

\begin{remark}
Let us underlined that the condition on $A$ to be increasing can be relaxed assuming it to be  a continuous bounded variation $\mathbb{F}$-adapted
process with $A_{0}=0$, $\mathbb{P}$-a.s. Indeed, by considering the
increasing process $\tilde{A}(t):=\left\Vert A\right\Vert _{BV([0,t])}$,
$t\in\lbrack0,T]$ and the Radon--Nikodym derivative $\gamma(t):=\frac
{dA(t)}{d\tilde{A}(t)}$, $t\in\lbrack0,T]$, we have that $\left\vert
\gamma(t)\right\vert \leq1$, $\forall t\in\lbrack0,T]$, $\mathbb{P}$-a.s. and the 
BSDE (\ref{BSDE}) can be rewritten as%
\begin{multline*}
Y(t)=\xi+\int_{t}^{T}F(s,Y(s),Z(s),Y_{s},Z_{s})ds+\int_{t}^{T}\tilde
{G}(s,Y(s),Y_{s})d\tilde{A}(s)\\
-\int_{t}^{T}Z(s)dW(s),\quad t\in\left[  0,T\right]  ,
\end{multline*}
where the new coefficient $\tilde{G}:\Omega\times\lbrack0,T]\times
\mathbb{R}^{m}\times L^{2}\left(  [-\delta,0];\mathbb{R}^{m}\right)
\rightarrow\mathbb{R}^{m}$ is defined as $\tilde{G}(t,y,\hat{y}):=\gamma
(t)G(t,y,\hat{y})$, still satisfying the condition $\mathrm{(A}%
_{3}\mathrm{)}$, by replacing 
 $A$ with $\tilde{A}$ and with the same $\tilde{L}$ and $\tilde{K}$. 
\end{remark}

As shown in \cite{de-im/10}, conditions as $\mathrm{(H}_{1}\mathrm{)}$ and
$\mathrm{(H}_{2}\mathrm{)}$ restricting the magnitude of the delay are
necessary. However, in the same paper the authors provide some examples
($F\equiv KY(t-T)$ and $F\equiv K\int_{0}^{t}Y(s)ds$, with $K\leq0$) in which
the delay can be considered of arbitrary length. The next result is a first
attempt to get rid of the restrictive assumptions concerning the delay, by
imposing monotonicity and linearity on generators $F$ and $G$.

More precisely, we assume that $m=1$, $\xi\in L^{2}\left(  \Omega
,\mathcal{F}_{T},\mathbb{P}\right)  $ and we require $F$ and $G$ not depending on $Z_{s}$), namely $F:\Omega\times\lbrack0,T]\times
\mathbb{R}\times\mathbb{R}^{d}\times L^{2}\left(  [-\delta,0]\right)
\rightarrow\mathbb{R}$ and $G:\Omega
\times\lbrack0,T]\times\mathbb{R}\times L^{2}\left(  [-\delta,0]\right)
\rightarrow\mathbb{R}$.

Moreover, we require that:
\begin{description}
\item[$\mathrm{(D}_{1}\mathrm{)}$] $\hat{y}\mapsto F(t,y,z,\hat{y})$ and $\hat{y}\mapsto
G(t,y,\hat{y})$ are non-increasing with respect to the positive cone of
$L^{2}\left(  [-\delta,0]\right)  $ for all $\left(  t,y,z\right)  \in
\lbrack0,T]\times\mathbb{R}\times\mathbb{R}^{d}$, $\mathbb{P}$-a.s.;

\item[$\mathrm{(D}_{2}\mathrm{)}$] $F(t,y,z,\hat{y})=F_{0}(t)+F_{1}(y,z,\hat{y})$, $G(t,y,\hat{y})=G_{0}%
(t)+G_{1}(y,\hat{y})$, with $F_{1}$ and $G_{1}$ linear.

\end{description}

\noindent
Thus, the BSDE \eqref{BSDE} reduces to the following one:
\begin{multline}
\label{linear}
Y(t)=\xi+\int_{t}^{T} \left[ F_{0}(s)+F_{1}(Y(s),Z(s),Y_s)ds \right] +\int_{t}^{T} \left[ G_{0}%
(s)+G_{1}(Y(s), Y_s) \right]  dA(s)\\
-\int_{t}^{T}Z(s)dW(s),\quad t\in\left[  0,T\right]  .
\end{multline}

\begin{proposition}
\label{prop}
Assume conditions $\mathrm{(D}_{1}\mathrm{)}$, $\mathrm{(D}%
_{2}\mathrm{)}$ and $\mathrm{(A}_{0}\mathrm{)}$--$\mathrm{(A}%
_{3}\mathrm{)}$ hold. If $\beta>2\sqrt{2}\tilde{L}$, then there exists a
solution $\left(  Y,Z\right)  \in\mathcal{S}_{\beta}^{2,1}\times
\mathcal{H}_{\beta}^{2,1\times d}$ for (\ref{linear}).
\end{proposition}

\begin{proof}
As in the proof of theorem \ref{theorem 2}, we consider the map $\Gamma
:\mathcal{S}_{\beta}^{2,1}\rightarrow\mathcal{S}_{\beta}^{2,1}$, defined in
the following way: for $U\in\mathcal{S}_{\beta}^{2,1}$, $\Gamma\left(
U\right)  =Y$, where the couple of adapted processes $\left(  Y,Z\right)  $ is
the solution to the equation%
\begin{multline*}
Y(t)=\xi+\int_{t}^{T} \left[ F_{0}(s)+F_{1}(Y(s),Z(s),U_{s})ds \right] +\int_{t}^{T} \left[ G_{0}%
(s)+G_{1}(Y(s), U_{s}) \right]  dA(s)\\
-\int_{t}^{T}Z(s)dW(s),\quad t\in\left[  0,T\right]  .
\end{multline*}
Using the same type of computations as in the above proof, it is easy to see
that even without conditions $\mathrm{(H}_{1}\mathrm{)}$ and $\mathrm{(H}%
_{2}\mathrm{)}$, $\Gamma$ is still a Lipschitz-continuous function. By a
classical comparison theorem for BSDEs, if $U^{1}(t)\leq U^{2}(t)$
$\mathbb{P}dt$-a.e., then $Y^{1}(t)\leq Y^{2}(t)$, $\forall t\in\lbrack0,T]$,
$\mathbb{P}$-a.s., with $Y^{i}(t):=\Gamma(U^{i})$, $i=\overline{1,2}$. This
shows that $\Gamma$ is non-increasing with respect to the positive cone of
$\mathcal{S}_{\beta}^{2,1}$.

One can use now an argument from \cite[Theorem 2.2]{li-wa/13} to show that
there exist $\underline{U},\overline{U}\in\mathcal{S}_{\beta}^{2,1}$ such that
$\Gamma([\underline{U},\overline{U}])\subseteq\lbrack\underline{U}%
,\overline{U}]$, where $[\underline{U},\overline{U}]:=\left\{  U\in
\mathcal{S}_{\beta}^{2,1}\mid\underline{U}(t)\leq U(t)\leq\overline
{U}(t),\text{ }\mathbb{P}dt\text{-a.e.}\right\}  $. Obviously, $[\underline{U}%
,\overline{U}]$ is a closed, convex set of the Banach space $\mathcal{S}%
_{\beta}^{2,1}$.

Let $Y^{0}:=\underline{U}$ and, by recursion, $Y^{n+1}:=\Gamma(Y^{n})$. By the
monotonicity property of $\Gamma$, it is easy to show that $\forall
t\in\lbrack0,T]$, $\mathbb{P}$-a.s.,%
\[
\underline{U}(t)=Y^{0}(t)\leq Y^{2}(t)\leq\dots\leq Y^{2n}(t)\leq\dots\leq
Y^{2n+1}(t)\leq\dots\leq Y^{3}(t)\leq Y^{1}(t)\leq\overline{U}(t).
\]
Let $\underline{Y}(t):=\lim_{n\rightarrow\infty}Y^{2n}(t)$ and $\overline
{Y}(t):=\lim_{n\rightarrow\infty}Y^{2n+1}(t)$. Since $\underline{U}%
,\overline{U}\in\mathcal{S}_{\beta}^{2,1}$, for any $H\in L^{2}(\Omega
;BV[0,T])$ or $H\in L^{2}(\Omega\times\lbrack0,T],\mathbb{P}dA(\cdot))$ we
have, by the dominated convergence theorem,%
\begin{align*}
\lim_{n\rightarrow\infty}\mathbb{E}\int_{0}^{T}e^{\beta A(t)}Y^{2n}(t)H(t)dt
&  =\mathbb{E}\int_{0}^{T}e^{\beta A(t)}\underline{Y}(t)H(t)dt\text{ and}\\
\lim_{n\rightarrow\infty}\mathbb{E}\int_{0}^{T}e^{\beta A(t)}Y^{2n+1}(t)H(t)dt
&  =\mathbb{E}\int_{0}^{T}e^{\beta A(t)}\overline{Y}(t)H(t)dt.
\end{align*}
Hence $(e^{\beta A(\cdot)/2}Y^{2n})$ and $(e^{\beta A(\cdot)/2}Y^{2n+1})$ converge
weakly to $e^{\beta A(\cdot)/2}\underline{Y}$, respectively $e^{\beta A(\cdot
)/2}\overline{Y}$, in both $L^{2}(\Omega;C[0,T])$ and $L^{2}(\Omega\times
\lbrack0,T],\mathbb{P}dA(\cdot))$. By Mazur's lemma (applied two times), for any $n\in\mathbb{N}$
there are convex combinations, let us call them $\underline{Y}^{n}$ and
$\overline{Y}^{n}$, of the elements of $(Y^{2k})_{k\geq n}$, respectively
$(Y^{2k+1})_{k\geq n}$, such that $(e^{\beta A(\cdot)/2}\underline{Y}^{n})$ and
$(e^{\beta A(\cdot)/2}\overline{Y}^{n})$ converge strongly in both $L^{2}%
(\Omega;C[0,T])$ and $L^{2}(\Omega\times\lbrack0,T],\mathbb{P}dA(\cdot))$ to
$e^{\beta A(\cdot)/2}\underline{Y}$, respectively $e^{\beta A(\cdot)/2}%
\overline{Y}$. Therefore, $(\underline{Y}^{n})$ and $(\overline{Y}^{n})$
converge strongly in $\mathcal{S}_{\beta}^{2,1}$ to $\underline{Y}$,
respectively $e^{\beta A(\cdot)}\overline{Y}$; thus, $\lim_{n\rightarrow
\infty}\Gamma(\underline{Y}^{n})=\Gamma(\underline{Y})$ and $\lim
_{n\rightarrow\infty}\Gamma(\overline{Y}^{n})=\Gamma(\overline{Y})$.

On the other hand, by the linearity of $F_{1}$ and $G_{1}$, $\Gamma
(\underline{Y}^{n})$ and $\Gamma(\overline{Y}^{n})$ are convex combinations of
the elements of $(Y^{2k+1})_{k\geq n}$, respectively $(Y^{2k})_{k\geq n}$, so
$e^{\beta A(\cdot)/2}\Gamma(\underline{Y}^{n})$ and $e^{\beta A(\cdot)/2}%
\Gamma(\overline{Y}^{n})$ converge pointwisely to $e^{\beta
A(\cdot)/2}\overline{Y}$, respectively $e^{\beta A(\cdot)/2}\underline{Y}$.
Consequently, $\Gamma(\underline{Y})=\overline{Y}$ and $\Gamma(\overline
{Y})=\underline{Y}$. Then, setting $Y=\frac{1}{2}\underline{Y}+\frac{1}%
{2}\overline{Y}$, we have $\Gamma(Y)=Y$, which proves our claim.
\end{proof}

\section{Dependence on parameters}
\label{sec3}

Let us consider, for all $n\in\mathbb{N}^{\ast}$, the following BSDEs which
approximate (\ref{BSDE}):%
\begin{multline}
Y^{n}(t)=\xi^{n}+\int_{t}^{T}F^{n}(s,Y^{n}(s),Z^{n}(s),Y_{s}^{n},Z_{s}%
^{n})ds+\int_{t}^{T}G^{n}(s,Y^{n}(s),Y_{s}^{n})dA^{n}(s)\\
-\int_{t}^{T}Z^{n}(s)dW(s),\quad t\in\left[  0,T\right]  , \label{BSDE_n}%
\end{multline}

In order to unify the notations, we will sometimes denote $\varsigma^{0}$
instead of $\varsigma$, if $\varsigma$ is $\xi$, $A$, $F$, $G$,\ $Y$ or $Z$.
We suppose that the coefficients $\xi^{n}$, $A^{n}$, $F^{n}$, $G^{n}$,
$n\geq0$, satisfy conditions $\mathrm{(A}_{2}\mathrm{)}$--$\mathrm{(A}%
_{3}\mathrm{)}$, $\mathrm{(H}_{1}\mathrm{)}$, $\mathrm{(H}_{2}\mathrm{)}$ with
processes $K^{n}$, $\tilde{K}^{n}$, but the same constants $\beta$, $c$, $L$,
$\tilde{L}$. Moreover, we have to impose that $\beta>2\sqrt{2}\tilde{L}$.

We suppose that there exists $p>1$ such that

\begin{description}
\item[$\mathrm{(A}_{0}^{\prime}\mathrm{)}$] $\sup_{n\in\mathbb{N}}%
\mathbb{E}\left[  e^{p\beta A^{n}(T)}\left\vert \xi^{n}\right\vert
^{2p}\right]  <+\infty$.

\item[$\mathrm{(A}_{0}^{\prime\prime}\mathrm{)}$] $\sup_{n\in\mathbb{N}%
}\mathbb{E}\left[  e^{qA^{n}(T)}\right]  <+\infty$, for any $q>0$.

\item[$\mathrm{(A}_{1}^{\prime}\mathrm{)}$] $\sup_{n\in\mathbb{N}}%
\mathbb{E}\left[  \left(  \int_{0}^{T}e^{\beta A^{n}(t)}\left\vert
F^{n}\left(  t,0,0,0,0\right)  \right\vert ^{2}dt\right)  ^{p}+\left(
\int_{0}^{T}e^{\beta A^{n}(t)}\left\vert G^{n}\left(  t,0,0\right)
\right\vert ^{2}dA^{n}(t)\right)  ^{p}\right]  <+\infty$.
\end{description}

Under these assumptions, there exists a unique solution $\left(  Y^{n}%
,Z^{n}\right)  \in\mathcal{S}_{\beta}^{2,m}\times\mathcal{H}_{\beta
}^{2,m\times d}$ to equation (\ref{BSDE_n}). In fact, one can now prove by
standard computations that $(Y^{n},Z^{n})\in\mathcal{S}_{\beta}^{p,m}%
\times\mathcal{H}_{\beta}^{p,m\times d}$, $\forall n\in\mathbb{N}$ and%
\begin{equation}
\sup_{n\in\mathbb{N}}\Vert(Y^{n},Z^{n})\Vert_{p,\beta}<+\infty.
\label{(Yn,Zn) bounded}%
\end{equation}

Our aim is to show that if the coefficients $\left(  \xi^{n},A^{n},F^{n}%
,G^{n}\right)  $ of equation (\ref{BSDE_n}) converge to $\left(
\xi,A,F,G\right)  $, then $\left(  Y^{n},Z^{n}\right)  $ converge to $(Y,Z)$
in $\mathcal{S}^{2,m}\times\mathcal{H}^{2,m\times d}$. Let now specify in what
sense the convergence of the coefficients takes place. We define%
\begin{align*}
\Delta_{n}F  &  :=\sup\nolimits_{t\in\lbrack0,T],\ (y,z)\in\mathbb{R}%
^{m}\times\mathbb{R}^{m\times d},\ (\hat{y},\hat{z})\in L^{2}\left(
[-\delta,0];\mathbb{R}^{m}\times\mathbb{R}^{m\times d}\right)  }\left\vert
F^{n}(t,y,z,\hat{y},\hat{z})-F(t,y,z,\hat{y},\hat{z})\right\vert ;\\
\Delta_{n}G  &  :=\sup\nolimits_{t\in\lbrack0,T],\ y\in\mathbb{R}^{m}%
\times\mathbb{R}^{m\times d},\ \hat{y}\in L^{2}\left(  [-\delta,0];\mathbb{R}%
^{m}\right)  }\left\vert G^{n}(t,y,\hat{y})-G(t,y,\hat{y})\right\vert
\end{align*}
and impose

\begin{description}
\item[$\mathrm{(C}_{1}\mathrm{)}$] $\mathbb{E}\left[  \left\vert \xi^{n}%
-\xi\right\vert ^{2p}\right]  \rightarrow0$ as $n\rightarrow\infty$;

\item[$\mathrm{(C}_{2}\mathrm{)}$] $\mathbb{E}\sup_{t\in\lbrack0,T]}\left\vert
A^{n}(t)-A(t)\right\vert \rightarrow0$ as $n\rightarrow\infty$;

\item[$\mathrm{(C}_{3}\mathrm{)}$] $\mathbb{E[}\left(  \Delta_{n}F\right)
^{2p}+\left(  \Delta_{n}G\right)  ^{2p}]\rightarrow0$ as $n\rightarrow\infty$.
\end{description}

The uniform convergence from assumption $\mathrm{(C}_{3}\mathrm{)}$ can be
relaxed to an weaker type of convergence; however, we will work with this
hypothesis for the sake of keeping computations as simple as possible.

\begin{theorem}
\label{theorem 2'}Assume that the above assumptions are fulfilled. Then%
\[
\lim_{n\rightarrow\infty}\mathbb{E}\left[  \sup_{t\in\lbrack0,T]}\left\vert
Y^{n}(t)-Y(t)\right\vert ^{2}+\int_{0}^{T}\left\vert Z^{n}(t)-Z(t)\right\vert
^{2}dt\right]  =0.
\]

\end{theorem}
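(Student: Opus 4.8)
The plan is to estimate the differences $\Delta Y^{n}:=Y^{n}-Y$ and $\Delta Z^{n}:=Z^{n}-Z$ by an It\^{o}-formula computation analogous to the contraction argument in the proof of Theorem \ref{theorem 2}, but now the two equations live on \emph{different} increasing processes $A^{n}$ and $A$, so the naive substraction does not telescope. The first step is therefore bookkeeping: write
\[
\Delta Y^{n}(t)=\Delta\xi^{n}+\int_{t}^{T}\!\bigl[F^{n}(s,Y^{n},Z^{n},Y^{n}_{s},Z^{n}_{s})-F(s,Y,Z,Y_{s},Z_{s})\bigr]ds+\int_{t}^{T}\!G^{n}(\cdots)dA^{n}(s)-\int_{t}^{T}\!G(\cdots)dA(s)-\int_{t}^{T}\Delta Z^{n}dW,
\]
and split each generator difference into a ``parameter part'' ($F^{n}-F$ and $G^{n}-G$ evaluated at the $n$-solution, controlled by $\Delta_{n}F$, $\Delta_{n}G$) and a ``Lipschitz part'' (handled as in Theorem \ref{theorem 2}). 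For the $dA^{n}$ versus $dA$ discrepancy, rewrite $\int G^{n}dA^{n}-\int G\,dA=\int(G^{n}-G)dA^{n}+\int G\,d(A^{n}-A)$, and the genuinely new term is $\int_{t}^{T} G(s,Y(s),Y_{s})\,d(A^{n}-A)(s)$.

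Next I would apply It\^{o}'s formula to $e^{\alpha t+\beta A^{n}(t)}|\Delta Y^{n}(t)|^{2}$ (with $\alpha:=8L^{2}+\tfrac12$ as before, using the weight $A^{n}$ so that the $dA^{n}$-terms absorb correctly), exactly mimicking the inequalities leading to \eqref{estimate_DY_DZ}; because the coefficients $F^{n},G^{n}$ satisfy $\mathrm{(A}_{2}\mathrm{)}$--$\mathrm{(A}_{3}\mathrm{)}$, $\mathrm{(H}_{1}\mathrm{)}$, $\mathrm{(H}_{2}\mathrm{)}$ with the \emph{same} constants, the delayed-path contributions get multiplied by the same $c<c_{\beta,\tilde L}$ and can be moved to the left-hand side up to a factor $\mu_{\lambda}<1$. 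Taking expectations, using the BDG inequality on the martingale term as in the proof of Theorem \ref{theorem 2}, and choosing $a,b,\lambda$ as there, one arrives at an estimate of the shape
\[
\|(\Delta Y^{n},\Delta Z^{n})\|_{2,\alpha,\beta,a,b}^{2}\;\le\;C\Bigl(\mathbb{E}|\Delta\xi^{n}|^{2}+\mathbb{E}(\Delta_{n}F)^{2}+\mathbb{E}(\Delta_{n}G)^{2}\cdot\mathbb{E}A^{n}(T)+R_{n}\Bigr),
\]
where $R_{n}$ collects (i) the Stieltjes remainder $\mathbb{E}\int_{0}^{T}e^{\alpha s+\beta A^{n}(s)}\langle\Delta Y^{n}(s),G(s,Y(s),Y_{s})\,d(A^{n}-A)(s)\rangle$ and (ii) the mismatch between the weights $e^{\beta A^{n}}$ and $e^{\beta A}$ appearing when comparing with the fixed equation for $(Y,Z)$. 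The uniform bound \eqref{(Yn,Zn) bounded} together with $\mathrm{(A}_{0}^{\prime\prime}\mathrm{)}$, H\"older's inequality, and $\mathrm{(C}_{1}\mathrm{)}$--$\mathrm{(C}_{3}\mathrm{)}$ kills all terms except $R_{n}$.

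The main obstacle is showing $R_{n}\to0$, i.e.\ that $\int_{0}^{T}\varphi_{n}(s)\,d(A^{n}-A)(s)\to0$ when $A^{n}\to A$ only \emph{uniformly} (not in variation), with the integrand $\varphi_{n}$ itself random and $n$-dependent. This is precisely where the ``stochastic Helly--Bray'' theorem announced in the Appendix enters: by $\mathrm{(A}_{1}^{\prime\prime\prime}\mathrm{)}$ the integrand $s\mapsto G(s,Y(s),Y_{s})$ is a.s.\ continuous, and $\mathrm{(C}_{2}\mathrm{)}$ gives $A^{n}\to A$ uniformly, so along a subsequence one gets $\int\varphi\,dA^{n}\to\int\varphi\,dA$ path by path; the moment bounds $\mathrm{(A}_{0}^{\prime\prime}\mathrm{)}$, $\mathrm{(A}_{1}^{\prime\prime\prime}\mathrm{)}$ and \eqref{(Yn,Zn) bounded} provide the uniform integrability needed to upgrade this to $L^{1}$-convergence of the expectation, and the usual subsequence argument removes the need to pass to a subsequence. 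Once $R_{n}\to0$ is established, the a-priori estimate forces $\|(\Delta Y^{n},\Delta Z^{n})\|_{2,\alpha,\beta,a,b}\to0$, and since this norm dominates (a constant times) $\mathbb{E}\bigl[\sup_{t}|\Delta Y^{n}(t)|^{2}+\int_{0}^{T}|\Delta Z^{n}(s)|^{2}ds\bigr]$ (using $e^{\alpha t+\beta A^{n}(t)}\ge 1$), the conclusion of Theorem \ref{theorem 2'} follows. A secondary technical nuisance worth flagging is handling the two different weights $e^{\beta A^{n}}$ and $e^{\beta A}$ simultaneously: I would control $|e^{\beta A^{n}(s)}-e^{\beta A(s)}|\le\beta e^{\beta(A^{n}(s)\vee A(s))}|A^{n}(s)-A(s)|$ and absorb it with $\mathrm{(C}_{2}\mathrm{)}$, $\mathrm{(A}_{0}^{\prime\prime}\mathrm{)}$ and H\"older, which costs the extra integrability margin $p>1$ that the hypotheses were set up to provide.
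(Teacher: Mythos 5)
Your overall architecture coincides with the paper's: It\^{o}'s formula applied to $e^{\alpha t+\beta A^{n}(t)}|\Delta_{n}Y(t)|^{2}$ with the weight built on $A^{n}$, the same splitting of the generator differences into Lipschitz and parameter parts, absorption of the delayed-path terms via $\mathrm{(H}_{1}\mathrm{)}$--$\mathrm{(H}_{2}\mathrm{)}$ exactly as in Theorem \ref{theorem 2}, and reduction of everything to the single remainder $\mathbb{E}\sup_{t}\bigl|\int_{t}^{T}X^{n}(s)\,d(A^{n}-A)(s)\bigr|$ with $X^{n}(s)=e^{\alpha s+\beta A^{n}(s)}\langle\Delta_{n}Y(s),G(s,Y(s),Y_{s})\rangle$, to be handled by the Appendix proposition. (The weight-mismatch issue you flag at the end does not actually arise: the It\^{o} computation is carried out entirely with the weight $e^{\beta A^{n}}$ and the conclusion only uses $e^{\alpha t+\beta A^{n}(t)}\geq1$, so $A$ never enters the estimates except through $d(A^{n}-A)$.)

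The one step that would fail as written is your treatment of this remainder. You correctly observe that the integrand is random and $n$-dependent, but then argue that ``along a subsequence one gets $\int\varphi\,dA^{n}\to\int\varphi\,dA$ path by path.'' A pathwise (a.s.) argument is not available: the integrand contains $\Delta_{n}Y$, whose a.s.\ (or even in-probability) convergence is precisely what the theorem is trying to establish, so invoking the deterministic Helly--Bray lemma $\omega$ by $\omega$ is circular; nor do the moment bounds (\ref{(Yn,Zn) bounded}) give a.s.\ relative compactness of the paths of $X^{n}$. This is exactly why the paper formulates Proposition \ref{prop1} at the level of convergence in distribution: one shows tightness of $(X^{n},H^{n})$ in $C[0,T]^{2}$, extracts by Prokhorov a subsequence converging in law to some $(X,H)$, identifies $H=0$ from $\mathrm{(C}_{2}\mathrm{)}$, applies the proposition (using that $\left\Vert H^{n}\right\Vert _{BV[0,T]}$ is bounded in probability thanks to $\mathrm{(A}_{0}^{\prime\prime}\mathrm{)}$) to conclude that $\int_{0}^{\cdot}X^{n}\,dH^{n}$ converges in distribution to $0$, and only then upgrades convergence in law to the constant $0$ into $L^{1}$-convergence by testing against the truncated functionals $\phi_{\nu}$ and controlling the tail with Markov's inequality and the uniform $L^{r}$ bounds. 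Your proposal needs this distributional detour (or an equivalent Skorokhod-representation argument) to be complete; with it inserted, the rest goes through as you describe.
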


\begin{proof}
Let us denote for short%
\begin{align*}
\Delta_{n}Y(t)  &  :=Y^{n}(t)-Y(t)\,,\quad\Delta_{n}Z(t):=Z^{n}%
(t)-Z(t)\,;\quad\Delta_{n}\xi:=\xi^{n}(t)-\xi(t)\,\\
\mathbf{\omega}_{\delta}^{n}  &  :=\sup_{t\in\lbrack0,T-\delta]}\left(
A^{n}(t+\delta)-A^{n}(t)\right)  .
\end{align*}
Exactly as in the proof of Theorem \ref{theorem 2}, by $\mathrm{(H}%
_{1}\mathrm{)}$ and $\mathrm{(H}_{2}\mathrm{)}$, we have%
\begin{align*}
\Big(\tfrac{K_{1}T}{4L^{2}}+\tfrac{4\tilde{K}_{1}\,A(T)}{\beta}\Big)e^{\alpha
\delta+\beta\mathbf{\omega}_{\delta}}  &  \leq2c\,,\quad\mathbb{P}%
\text{-a.s.;}\\
\tfrac{K_{1}}{4L^{2}}e^{\alpha\delta+\beta\mathbf{\omega}_{\delta}}  &  \leq
c\,,\quad\mathbb{P}\text{-a.s,}%
\end{align*}
for all $n\in\mathbb{N}$, where $\alpha=8L^{2}+\frac{1}{2}$. Let us apply
It\^{o}'s formula to $e^{\alpha t+\beta A(t)}\left\vert Y^{n}%
(t)-Y(t)\right\vert ^{2}$:%
\[%
\begin{array}
[c]{l}%
\displaystyle e^{\alpha t+\beta A^{n}(t)}|\Delta_{n}Y(t)|^{2}+\int_{t}%
^{T}e^{\alpha s+\beta A^{n}(s)}|\Delta_{n}Y(s)|^{2}\left(  \alpha
\,ds+\beta\,dA^{n}(s)\right)  +\int_{t}^{T}e^{\alpha s+\beta A^{n}(s)}%
|\Delta_{n}Z(s)|^{2}ds\medskip\\
\displaystyle=e^{\alpha T+\beta A^{n}(T)}|\Delta_{n}\xi|^{2}-2\int_{t}%
^{T}e^{\alpha s+\beta A^{n}(s)}\langle\Delta_{n}Y(s),\Delta_{n}%
Z(s)dW(s)\rangle\medskip\\
\displaystyle\quad+2\int_{t}^{T}e^{\alpha s+\beta A^{n}(s)}\langle\Delta
_{n}Y(s),F^{n}(s,Y^{n}(s),Z^{n}(s),Y_{s}^{n},Z_{s}^{n})-F(s,Y(s),Z(s),Y_{s}%
,Z_{s})\rangle ds\medskip\\
\displaystyle\quad+2\int_{t}^{T}e^{\alpha s+\beta A^{n}(s)}\langle\Delta
_{n}Y\left(  s\right)  ,G^{n}(s,Y^{n}(s),Y_{s}^{n})dA^{n}(s)-G(s,Y(s),Y_{s}%
)dA(s)\rangle\,.
\end{array}
\]

From assumptions $\mathrm{(A}_{2}\mathrm{)}$--$\mathrm{(A}_{3}\mathrm{)}$ and
$\mathrm{(A}_{1}^{\prime}\mathrm{)}$, we have, with $K_{1}^{n}:=\sup
_{t\in\lbrack0,T]}K^{n}$ and $\tilde{K}_{1}^{n}:=\sup_{t\in\lbrack0,T]}%
\tilde{K}^{n}$,%
\[%
\begin{array}
[c]{l}%
\displaystyle2\int_{t}^{T}e^{\alpha s+\beta A^{n}(s)}\langle\Delta
_{n}Y(s),F^{n}(s,Y^{n}(s),Z^{n}(s),Y_{s}^{n},Z_{s}^{n})-F(s,Y(s),Z(s),Y_{s}%
,Z_{s})\rangle ds\medskip\\
\displaystyle\leq8L^{2}\int_{t}^{T}e^{\alpha s+\beta A^{n}(s)}|\Delta
_{n}Y\left(  s\right)  |^{2}ds+\frac{1}{2}\left\vert \Delta_{n}F\right\vert
^{2}\int_{t}^{T}e^{\alpha s+\beta A^{n}(s)}ds\medskip\\
\displaystyle\quad+\frac{1}{2}\int_{t}^{T}e^{\alpha s+\beta A^{n}\left(
s\right)  }\left(  |\Delta_{n}Y\left(  s\right)  |^{2}+|\Delta_{n}Z\left(
s\right)  |^{2}\right)  dr\medskip\\
\displaystyle\quad+\frac{K_{1}^{n}Te^{\alpha\delta+\beta\mathbf{\omega
}_{\delta}^{n}}}{4L^{2}}\sup\nolimits_{s\in\left[  0,T\right]  }\big(e^{\alpha
s+\beta A^{n}(s)}|\Delta_{n}Y\left(  s\right)  |^{2}\big)\medskip\\
\displaystyle\quad+\frac{K_{1}^{n}e^{\alpha\delta+\beta\mathbf{\omega}%
_{\delta}^{n}}}{4L^{2}}\,\int_{0}^{T}e^{\alpha r+\beta A^{n}\left(  r\right)
}|\Delta_{n}Z\left(  r\right)  |^{2}dr
\end{array}
\]
and, for all $b>0$,%
\[%
\begin{array}
[c]{l}%
\displaystyle2\int_{t}^{T}e^{\alpha s+\beta A^{n}(s)}\langle\Delta_{n}Y\left(
s\right)  ,G^{n}(s,Y^{n}(s),Y_{s}^{n})dA^{n}(s)-G(s,Y(s),Y_{s})dA(s)\rangle
\medskip\\
\displaystyle=2\int_{t}^{T}e^{\alpha s+\beta A^{n}(s)}\langle\Delta
_{n}Y\left(  s\right)  ,G^{n}(s,Y^{n}(s),Y_{s}^{n})-G^{n}(s,Y(s),Y_{s})\rangle
dA^{n}\left(  s\right) \\
\displaystyle\quad+2\int_{t}^{T}e^{\alpha s+\beta A^{n}(s)}\langle\Delta
_{n}Y\left(  s\right)  ,G^{n}(s,Y^{n}(s),Y_{s}^{n})-G(s,Y(s),Y_{s})\rangle
dA^{n}\left(  s\right)  \medskip\\
\displaystyle\quad+2\int_{t}^{T}e^{\alpha s+\beta A^{n}(s)}\langle\Delta
_{n}Y\left(  s\right)  ,G(s,Y(s),Y_{s})\rangle\,\left(  dA^{n}\left(
s\right)  -dA\left(  s\right)  \right)  \medskip\\
\displaystyle\leq2\int_{t}^{T}e^{\alpha s+\beta A^{n}(s)}\langle\Delta
_{n}Y\left(  s\right)  ,G(s,Y(s),Y_{s})\rangle\,\left(  dA^{n}\left(
s\right)  -dA\left(  s\right)  \right)  \medskip\\
\displaystyle\quad+b\int_{t}^{T}e^{\alpha s+\beta A^{n}(s)}|\Delta_{n}Y\left(
s\right)  |^{2}dA^{n}\left(  s\right)  +\frac{4\tilde{L}^{2}}{b}\left\vert
\Delta_{n}G\right\vert ^{2}\int_{t}^{T}e^{\alpha s+\beta A^{n}(s)}%
dA^{n}\left(  s\right) \\
\displaystyle\quad+\frac{\beta}{2}\int_{t}^{T}e^{\alpha s+\beta A^{n}%
(s)}|\Delta_{n}Y\left(  s\right)  |^{2}dA^{n}\left(  s\right)  +\frac
{4\tilde{L}^{2}}{\beta}\int_{t}^{T}e^{\alpha s+\beta A^{n}\left(  s\right)
}|\Delta_{n}Y\left(  s\right)  |^{2}dA^{n}\left(  s\right)  \medskip\\
\displaystyle\quad+\frac{4\tilde{K}_{1}^{n}\,A^{n}\left(  T\right)
e^{\alpha\delta+\beta\mathbf{\omega}_{\delta}^{n}}}{\beta}\sup\nolimits_{s\in
\left[  0,T\right]  }\big(e^{\alpha s+\beta A^{n}(s)}|\Delta_{n}Y\left(
s\right)  |^{2}\big)\,.
\end{array}
\]

Since $\alpha=4L^{2}+1$ and $\beta>2\sqrt{2}\tilde{L}$, one can choose
$b:=\frac{\beta}{2}-\frac{4\tilde{L}^{2}}{\beta}$ and so we obtain%
\[%
\begin{array}
[c]{l}%
\displaystyle e^{\alpha t+\beta A^{n}(t)}|\Delta_{n}Y(t)|^{2}+\frac{1}{2}%
\int_{t}^{T}e^{\alpha s+\beta A^{n}(s)}|\Delta_{n}Z(s)|^{2}ds\medskip\\
\displaystyle\leq e^{\alpha T+\beta A^{n}(T)}|\Delta_{n}\xi|^{2}-2\int_{t}%
^{T}e^{\alpha s+\beta A^{n}(s)}\langle\Delta_{n}Y(s),\Delta_{n}%
Z(s)dW(s)\rangle\medskip\\
\displaystyle\quad+2\int_{t}^{T}e^{\alpha s+\beta A^{n}(s)}\langle\Delta
_{n}Y\left(  s\right)  ,G(s,Y(s),Y_{s})\rangle\,\left(  dA^{n}\left(
s\right)  -dA\left(  s\right)  \right)  \medskip\\
\displaystyle\quad+\frac{1}{2}\left\vert \Delta_{n}F\right\vert ^{2}\int%
_{t}^{T}e^{\alpha s+\beta A^{n}(s)}ds+\frac{4\tilde{L}^{2}}{b}\left\vert
\Delta_{n}G\right\vert ^{2}\int_{t}^{T}e^{\alpha s+\beta A^{n}(s)}%
dA^{n}\left(  s\right) \\
\displaystyle\quad+\frac{K_{1}Te^{\alpha\delta+\beta\mathbf{\omega}_{\delta
}^{n}}}{4L^{2}}\sup\nolimits_{s\in\left[  0,T\right]  }\big(e^{\alpha s+\beta
A^{n}(s)}|\Delta_{n}Y\left(  s\right)  |^{2}\big)\medskip\\
\displaystyle\quad+\frac{K_{1}e^{\alpha\delta+\beta\mathbf{\omega}_{\delta
}^{n}}}{4L^{2}}\,\int_{0}^{T}e^{\alpha s+\beta A^{n}\left(  s\right)  }%
|\Delta_{n}Z\left(  s\right)  |^{2}ds\\
\displaystyle\quad+\frac{4\tilde{K}_{1}\,A^{n}\left(  T\right)  e^{\alpha
\delta+\beta\mathbf{\omega}_{\delta}^{n}}}{\beta}\,\sup\nolimits_{s\in\left[
0,T\right]  }\big(e^{\alpha s+\beta A^{n}(s)}|\Delta_{n}Y\left(  s\right)
|^{2}\big).
\end{array}
\]

Therefore, by conditions $\mathrm{(H}_{1}\mathrm{)}$ and $\mathrm{(H}%
_{2}\mathrm{)}$,%
\[%
\begin{array}
[c]{l}%
\displaystyle\frac{1}{2}\int_{t}^{T}e^{\alpha s+\beta A^{n}(s)}|\Delta
_{n}Z(s)|^{2}ds\medskip\\
\displaystyle\leq2\int_{t}^{T}e^{\alpha s+\beta A^{n}(s)}\langle\Delta
_{n}Y\left(  s\right)  ,G(s,Y(s),Y_{s})\rangle\,\left(  dA^{n}\left(
s\right)  -dA\left(  s\right)  \right)  \medskip\\
\displaystyle\quad+\frac{1}{2}\left\vert \Delta_{n}F\right\vert ^{2}\int%
_{t}^{T}e^{\alpha s+\beta A^{n}(s)}ds+\frac{4\tilde{L}^{2}}{b}\left\vert
\Delta_{n}G\right\vert ^{2}\int_{t}^{T}e^{\alpha s+\beta A^{n}(s)}%
dA^{n}\left(  s\right)  \medskip\\
\displaystyle\quad+2c\,\sup\nolimits_{s\in\left[  0,T\right]  }\big(e^{\alpha
s+\beta A^{n}(s)}|\Delta_{n}Y\left(  s\right)  |^{2}\big)+c\int_{0}%
^{T}e^{\alpha s+\beta A^{n}\left(  s\right)  }|\Delta_{n}Z\left(  s\right)
|^{2}ds.
\end{array}
\]

Exploiting Burkholder--Davis--Gundy's inequality, we have that
\[%
\begin{array}
[c]{l}%
\displaystyle2\mathbb{E}\Big[\sup\nolimits_{t\in\left[  0,T\right]  }%
\Big|\int_{t}^{T}e^{\alpha s+\beta A^{n}(s)}\langle\Delta_{n}Y(s),\Delta
_{n}Z(s)dW(s)\rangle\Big|\Big]\medskip\\
\displaystyle\leq\frac{1}{4}\,\mathbb{E}\big(e^{\alpha s+\beta A^{n}%
(s)}|\Delta_{n}Y\left(  s\right)  |^{2}\big)+144\,\mathbb{E}\int_{0}%
^{T}e^{\alpha s+\beta A^{n}\left(  s\right)  }|\Delta_{n}Z\left(  s\right)
|^{2}ds\,.
\end{array}
\]
As in the proof of Theorem \ref{theorem 2}, we obtain%
\[%
\begin{array}
[c]{l}%
\displaystyle\mathbb{E}\big(\sup\nolimits_{s\in\left[  t,T\right]  }e^{\alpha
s+\beta A^{n}\left(  s\right)  }|\Delta_{n}Y\left(  s\right)  |^{2}%
\big)+\mathbb{E}\int_{0}^{T}e^{\alpha s+\beta A^{n}\left(  s\right)  }%
|\Delta_{n}Z\left(  s\right)  |^{2}ds\medskip\\
\displaystyle\leq C\mathbb{E}\left[  |\Delta_{n}\xi|^{2p}+|\Delta_{n}%
F|^{2p}+|\Delta_{n}G|^{2p}\right]  \cdot\mathbb{E}e^{\beta qA^{n}(T)}%
\medskip\\
\displaystyle\quad+C\mathbb{E}\sup_{t\in\lbrack0,T]}\left\vert \int_{t}%
^{T}e^{\alpha s+\beta A^{n}(s)}\langle\Delta_{n}Y\left(  s\right)
,G(s,Y(s),Y_{s})\rangle\,\left(  dA^{n}\left(  s\right)  -dA\left(  s\right)
\right)  \right\vert .
\end{array}
\]
where $C$ is a positive constant and $q:=\frac{p}{p-1}$.

By conditions $\mathrm{(C}_{1}\mathrm{)}$ and $\mathrm{(A}_{0}^{\prime\prime
}\mathrm{)}$,%
\[
\lim_{n\rightarrow\infty}\mathbb{E}\left[  |\Delta_{n}\xi|^{2p}+|\Delta
_{n}F|^{2p}+|\Delta_{n}G|^{2p}\right]  \cdot\mathbb{E}e^{\beta qA^{n}(T)}=0.
\]
It remains to prove that%
\[
\lim_{n\rightarrow\infty}\mathbb{E}\sup_{t\in\lbrack0,T]}\left\vert \int%
_{t}^{T}X^{n}(s)dH^{n}(s)\right\vert =0,
\]
where, for $s\in\lbrack0,T]$,%
\begin{align*}
X^{n}(s)  &  :=e^{\alpha s+\beta A^{n}(s)}\langle\Delta_{n}Y\left(  s\right)
,G(s,Y(s),Y_{s})\rangle;\\
H^{n}(s)  &  :=A^{n}\left(  s\right)  -A\left(  s\right)  .
\end{align*}
One can prove that%
\[
\mathbb{E}\sup_{t\in\lbrack0,T]}\left\vert X^{n}(t)\right\vert ^{p}%
\]
is uniformly bounded (with respect to $n$), by (\ref{(Yn,Zn) bounded}).
Obviously, by $\mathrm{(A}_{0}^{\prime\prime}\mathrm{)}$,%
\[
\sup_{n\in\mathbb{N}^{\ast}}\mathbb{E}\sup_{t\in\lbrack0,T]}\left\vert
H^{n}(t)\right\vert ^{2}<+\infty.
\]
Hence, the sequence $\left(  X^{n},H^{n}\right)  _{n\in\mathbb{N}^{\ast}}$ is
tight in $C\left[  0,T\right]  ^{2}$. By Prokhorov's theorem, we can extract a
sequence, say $\left(  X^{n_{k}},H^{n_{k}}\right)  _{k\in\mathbb{N}^{\ast}}$,
convergent in distribution to some stochastic process $\left(  X,H\right)  $
with continuous paths. Since, by $\mathrm{(C}_{2}\mathrm{)}$, $\lim
_{n\rightarrow\infty}\mathbb{E}\sup_{t\in\lbrack0,T]}\left\vert H^{n}%
(t)\right\vert =0$, $H$ must be $\mathbb{P}$-a.s. equal to $0$. The condition
$\mathrm{(A}_{0}^{\prime\prime}\mathrm{)}$ also implies that $\sup
_{n\in\mathbb{N}}\mathbb{E}\left\Vert H^{n}\right\Vert _{BV[0,T]}^{a}<+\infty
$, for every $a>1$, so $\left\Vert H^{n}\right\Vert _{BV[0,T]}$ is bounded in
probability (\textit{i.e.}, it satisfies condition (\ref{condition_bv_bounded}%
)). We can now apply Proposition \ref{prop1}, proved as an auxiliary result in
the Appendix section, in order to derive the convergence in distribution to
$0$ of the process%
\[
\left(  \int_{0}^{t}X^{n}(s)dH^{n}(s)\right)  _{t\in\lbrack0,T]}.
\]
Since, for some $\nu>0$, the functional $\phi_{\nu}:C\left[  0,T\right]
\rightarrow\mathbb{R}$, defined by%
\[
\phi_{\nu}(\boldsymbol{x}):=\sup_{t\in\lbrack0,T]}\left\vert \boldsymbol{x}%
(T)-\boldsymbol{x}(t)\right\vert \wedge\nu,
\]
is bounded and continuous, it follows that%
\[
\mathbb{E}\left[  \sup_{t\in\lbrack0,T]}\left\vert \int_{t}^{T}X^{n}%
(s)dH^{n}(s)\right\vert \wedge\nu\right]  =0,
\]
for every $\nu>0$. Since, by Markov's inequality, for some $a\in(1,p)$%
\begin{multline*}
\mathbb{E}\sup_{t\in\lbrack0,T]}\left\vert \int_{t}^{T}X^{n}(s)dH^{n}%
(s)\right\vert \leq\mathbb{E}\left[  \sup_{t\in\lbrack0,T]}\left\vert \int%
_{t}^{T}X^{n}(s)dH^{n}(s)\right\vert \wedge\nu\right] \\
+\frac{1}{\nu^{a}}\mathbb{E}\left[  \sup_{t\in\lbrack0,T]}\left\vert \int%
_{t}^{T}X^{n}(s)dH^{n}(s)\right\vert ^{a}\right]
\end{multline*}
and%
\begin{align*}
\mathbb{E}\left[  \sup_{t\in\lbrack0,T]}\left\vert \int_{t}^{T}X^{n}%
(s)dH^{n}(s)\right\vert ^{a}\right]   &  \leq\mathbb{E}\left[  \left(
\sup_{t\in\lbrack0,T]}\left\vert X^{n}(t)\right\vert ^{a}\right)  \left\Vert
H^{n}\right\Vert _{BV[0,T]}^{a}\right] \\
&  \leq\left(  \mathbb{E}\left[  \sup_{t\in\lbrack0,T]}\left\vert
X^{n}(t)\right\vert ^{p}\right]  \right)  ^{\frac{a}{p}}\left(  \mathbb{E}%
\left\Vert H^{n}\right\Vert _{BV[0,T]}^{\frac{p}{a(p-a)}}\right)  ^{1-\frac
{a}{p}},
\end{align*}
it follows that%
\[
\lim_{n\rightarrow\infty}\mathbb{E}\sup_{t\in\lbrack0,T]}\left\vert \int%
_{t}^{T}X^{n}(s)dH^{n}(s)\right\vert =0,
\]
which concludes our proof.
\end{proof}


\section{Hedging a stream of payments with time-delayed GBSDE}
\label{application}

In this last section we present a risk management application for an insurance product, the so called variable annuity instrument, whose composition can be controlled
by the insurer  selecting an appropriate strategy. The composition for the underlying investment portfolio can be controlled internally by
the insurer to reduce the overall risk of the policyholder investment. Specifically, inspired by \cite{de/11}, we consider an insurance product where the policyholder withdraws some guaranteed amounts as a fraction of the maximum value of the investment and, additionally, is subjected to a continuous payment triggered by an increasing continuous process $A$ modelling the cumulative function of claims (or, e.g. of fees for the management of the wealth). At maturity the remaining value is converted into a life-time annuity with 
a guaranteed consumption rate $C$. 

\medskip

We consider a probability space $(\Omega, \mathcal{F}, \mathbb{P})$ with associated natural filtration $\mathbb{F} = (\mathcal{F}_t)_{0 \leq t \leq R}$ generated by a Brownian motion $W := (W (t), \, 0 \leq t \leq T)$ and a finite time horizon $T \leq \infty$. 

The goal of the investor is to replicate the insurance by investing into the assets and to quantify the risk of the investing activities. In the terminology of \cite{ka-pe-qu/97}, we focus on an investment composed by a  risk free asset $S_0$ and a risky asset $D$.

The price of the risk free asset $S_0 := (S_0 (t), \, 0 \leq t \leq T)$ is given by the equation

\begin{equation}
    \dfrac{dS_0(t)}{S_0 (t)} = r(t)dt, \qquad S_0 (0) = 1,
\end{equation}
where $r$ describes the risk free interest rate being a non-negative $\mathbb{F}$-progressively measurable stochastic process.

The  price of the risky bond $D := (D (t), \, 0 \leq t \leq T)$ with maturity $T$ is given by

\begin{equation}
    \dfrac{dD(t)}{D (t)} = \left( r(t) + \sigma (t) \theta(t) \right) dt + \sigma (t) dW(t), \qquad S (0) = x,
\end{equation}
where the volatility $\sigma := (\sigma (t), \, 0 \leq t \leq T)$ and the risk premium $\theta := (\theta (t), \, 0 \leq t \leq T)$ are $\mathbb{F}$-progressively measurable processes.


On the other hand, the stream of liabilities $P(t) := (P (t), \, 0 \leq t \leq T)$ depends on the past value of the portfolio by the following:  
\begin{equation}
    \label{liab}
 P(t)  = \gamma \sup_{s \in [0,t]} \left\{ X(s) \right\} dt + \int_0^t        X (s - \delta) d A(s).
 \end{equation}
The first term models a guaranteed withdrawal amount as a fraction $\gamma \in  (0,1)$ of the running maximum value of the investment value. Instead, the second term models a Stieltjes integral representing the total amount of continuous claims that depends on a past value of the investment and that are triggered by the increasing continuous function $A$. We emphasize that if we consider  no dependence on the value of the investment $X$, i.e. only $\int_0^t         d A(s)$, we obtain the well-known case with $A$ representing a cumulative
consumption process. See, e.g., \cite{ka-pe-qu/97} for a detailed description or \cite{TouEli} for the problem of utility maximization under a drawdown constraint setting.

We consider a  self financing investment portfolio $X := \left( X (t), 0 \leq t \leq T \right)$, while the admissible strategy $\pi := \left( \pi (t), 0 \leq t \leq T \right)$ denotes the amount invested in the risky bond $D$. 


We denote $\mu(t) = r(t) + \theta (t) \sigma(t)$ and we write the dynamic of $X$ by the following SDE 

\begin{align}
\label{app_for}
\displaystyle dX (t) =  &\pi (t) \dfrac{dD(t)}{D(t)} + \left( X (t) - \pi (t) \right) \dfrac{dS_0 (t)}{S_0(t)} - dP(t)\medskip \\ \notag
\displaystyle \quad \qquad =  &\pi (t) \left( \mu (t) dt + \sigma (t) d W(t) \right) + \left( X (t) - \pi (t) \right) r(t) dt\medskip\\ \notag \displaystyle
& - \gamma \sup_{s \in [0,t]} \left\{ X(s) \right\} dt - \int_0^t        X (s - \delta) d A(s)\medskip \\ \notag \displaystyle
X (T) =  & C a (T) \medskip \, , \notag
\end{align}
$a$ being the annuity factor $
a(T) = \mathbb{E}^{\mathbb{Q}} \left[ \int_T^\infty e^{-\int_T^s r(u)du} ds \vert \mathcal{F}_T \right]
$.

Equation \eqref{app_for} models a variable annuity contract where the policyholder's contributions are invested into two assets ($D$ and $S_0$). Positive returns are distributed to policyholder account based on on the maximum value of the investment and on a prescribed process $A$ (hedging fee) while the remaining value at maturity is  received as a life-time annuity.

From \cite{de/11}, we know that there exists a unique equivalent martingale measure $\mathbb{Q} \sim \mathbb{P}$ under which the discounted price process $S$ is a $(\mathbb{Q},\mathbb{F})$-martingale. Thus, we perform the following change of variables 

\begin{equation}
    Y(t) = X(t) e^{-\int_0^t r(s) ds}\, , \qquad Z(t) = \pi(t) \sigma(t) e^{-\int_0^t r(s) ds} \, , \qquad 0 \leq t \leq T 
\end{equation}
giving the following dynamic for the discounted portfolio process $Y := (Y(t))_{0 \leq t \leq T}$ under the measure $\mathbb{Q}$
\begin{equation}
\label{app_for2}
\begin{array}{l}
    \displaystyle Y(t) = C \tilde{a} (T)  + \int_t^T \gamma \sup_{u \in [0,s] } \left\{ Y (u) e^{-\int_u^s r(v) dv}\right\} ds + \medskip \\ \displaystyle \hspace{5cm} +  \int_t^T Y (s - \delta) e^{-\int_0^{s - \delta} r(v) dv} dA(s) - \int_t^T Z(s)d W^\mathbb{Q} (s) \, ,\medskip
    \end{array}
\end{equation}
$W^\mathbb{Q}$ being a $\mathbb{Q}$-Brownian motion. 

By assuming that conditions
$\mathrm{(A}_{0}\mathrm{)}$ - $\mathrm{(A}%
_{3}\mathrm{)}$ and
$\mathrm{(H}_{1}\mathrm{)}$ - $\mathrm{(H}_{2}\mathrm{)}$ hold true and
by applying Theorem \ref{theorem 2}, we obtain existence and uniqueness of the solution of  equation \eqref{app_for2}. Moreover, the stability of the investment under a perturbation (in uniform norm) of the distribution of the prescribed cumulative distribution is obtained by  Theorem \ref{theorem 2'}, letting to model a robust hedging for the investment with respect to a modification of the prescribed cumulative distribution of future claims.




\section{Conclusions and further developments}

In this article we develop a theoretical framework to study a BSDE with time-delayed generator whose dynamic depends also on  Stieltjes integral term. Under regular assumptions of the coefficients and small delay, we prove the well-posedness of the problem in terms of existence, uniqueness and stability under a perturbation in uniform norm. We also provide an application of our results for a BSDE in insurance setting. Moreover, we obtain the global (in time) well posedness of the BSDE for an arbitrary delay that represent a novel result
in the literature, representing a first attempt to handle (globally) time delayed BSDE. Providing a solid theoretical background for this setting could open up new directions for applications.


Concerning further direction of research, other extensions  would consider the forward reflected SDE linked to the Stieltjes integral in \eqref{BSDE_0} to investigate the corresponding FBSDE with delayed generator and possible connections with the nonlinear 
PDE with Neumann
boundary conditions in the spirit of \cite{pa-zh/98}. Another possibility concerns  considering 
Stieltjes integration with respect to increasing functions that are not necessarily continuous, dealing with dynamics driven by Poisson random measure.


\appendix

\section{Appendix}

In this section we state the result used in the proof of Theorem
\ref{theorem 2'}. It is a variant of the Helly-Bray theorem for the stochastic
case and is also stronger than Proposition 3.4 from \cite{za/08}:\footnote{In
the same time, it corrects an error in the statement of that result:
\textquotedblleft Let $X_{n}$, $K_{n}$ $:(%
\Omega
n,\mathcal{F}n,Pn)\rightarrow\mathbf{W},n\geq1,$ be two sequences of random
variables, converging in distribution to $X$, respectively $K$%
\textquotedblright, should be replaced with \textquotedblleft Let
$(X_{n},K_{n}):(%
\Omega
n,\mathcal{F}n,Pn)\rightarrow\mathbf{W}^{2},n\geq1,$ be a sequence of random
variables, converging in distribution to $(X,K)$\textquotedblright. We
emphasize that this doesn't affect in any way the validity of the other
results in that paper, since the arguments involved use in fact this stronger
assumption.}

\begin{proposition}
\label{prop1}Let $(X_{n},H_{n}):(%
\Omega
_{n},\mathcal{F}_{n},\mathbb{P}_{n})\rightarrow C([0,T];\mathbb{R}^{d}%
)^{2},\ n\geq1,$ be a sequence of random variables, converging in distribution
to a random variable $(X,H):(%
\Omega
,\mathcal{F},\mathbb{P})\rightarrow C([0,T];\mathbb{R}^{d})^{2}$. If for all
$n\geq1$, $H_{n}$ is $\mathbb{P}_{n}$-a.s. with bounded variation and%
\begin{equation}
\lim_{\nu\rightarrow+\infty}\sup_{n\geq1}\mathbb{P}_{n}\left(  \left\Vert
H_{n}\right\Vert _{BV([0,T];\mathbb{R}^{d})}>\nu\right)  =0,
\label{condition_bv_bounded}%
\end{equation}
then $H$ is $\mathbb{P}$-a.s. with bounded variation and the sequence of
$C[0,T]$-valued random variables $\left(  \int_{0}^{\cdot}\left\langle
X_{n}(s),dH_{n}(s)\right\rangle \right)  _{n\geq1}$ converges in distribution
to $\int_{0}^{\cdot}\left\langle X(s),dH_{n}(s)\right\rangle $.
\end{proposition}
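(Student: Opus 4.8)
The plan is to reduce the statement to a purely deterministic Helly--Bray-type continuity property of the Stieltjes integral, combined with a truncation in the bounded-variation norm. Since $C([0,T];\mathbb{R}^{d})^{2}$ is a Polish space, the Skorokhod representation theorem lets me replace $(X_{n},H_{n})$ and $(X,H)$ by random variables with the same laws, all carried by a single probability space $(\bar{\Omega},\bar{\mathcal{F}},\bar{\mathbb{P}})$, such that $(\bar{X}_{n},\bar{H}_{n})\rightarrow(\bar{X},\bar{H})$ $\bar{\mathbb{P}}$-almost surely in $C([0,T];\mathbb{R}^{d})^{2}$, i.e., uniformly on $[0,T]$; condition (\ref{condition_bv_bounded}) continues to hold for the $\bar{H}_{n}$ (it involves only their laws), and likewise each $\bar{H}_{n}$ is $\bar{\mathbb{P}}$-a.s. of bounded variation. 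Now $\boldsymbol{\eta}\mapsto\mathrm{V}_{0}^{T}(\boldsymbol{\eta})$, hence $\boldsymbol{\eta}\mapsto\Vert\boldsymbol{\eta}\Vert_{BV([0,T];\mathbb{R}^{d})}$, is lower semicontinuous for pointwise (a fortiori uniform) convergence, so $\Vert\bar{H}\Vert_{BV([0,T];\mathbb{R}^{d})}\leq\liminf_{n}\Vert\bar{H}_{n}\Vert_{BV([0,T];\mathbb{R}^{d})}$ $\bar{\mathbb{P}}$-a.s., and together with (\ref{condition_bv_bounded}) this gives $\bar{\mathbb{P}}\bigl(\Vert\bar{H}\Vert_{BV([0,T];\mathbb{R}^{d})}>\nu\bigr)\leq\sup_{n\geq1}\bar{\mathbb{P}}\bigl(\Vert\bar{H}_{n}\Vert_{BV([0,T];\mathbb{R}^{d})}>\nu\bigr)$ for every $\nu>0$; hence $\bar{H}$, and therefore $H$, is $\mathbb{P}$-a.s. of bounded variation, and the $C[0,T]$-valued random variables $\int_{0}^{\cdot}\langle\bar{X}_{n}(s),d\bar{H}_{n}(s)\rangle$ and $\int_{0}^{\cdot}\langle\bar{X}(s),d\bar{H}(s)\rangle$ are well defined.

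The deterministic heart of the proof is the following. For $\nu>0$ put $\mathcal{A}_{\nu}:=\{(\boldsymbol{x},\boldsymbol{\eta})\in C([0,T];\mathbb{R}^{d})^{2}:\Vert\boldsymbol{\eta}\Vert_{BV([0,T];\mathbb{R}^{d})}\leq\nu\}$, which is closed in $C([0,T];\mathbb{R}^{d})^{2}$ by the semicontinuity just recalled, and define $\Psi_{\nu}:\mathcal{A}_{\nu}\rightarrow C[0,T]$ by $\Psi_{\nu}(\boldsymbol{x},\boldsymbol{\eta})(t):=\int_{0}^{t}\langle\boldsymbol{x}(s),d\boldsymbol{\eta}(s)\rangle$ (continuous in $t$ because $\boldsymbol{\eta}$ is continuous). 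I would prove that $\Psi_{\nu}$ is continuous on $\mathcal{A}_{\nu}$: given $(\boldsymbol{x}_{k},\boldsymbol{\eta}_{k})\rightarrow(\boldsymbol{x},\boldsymbol{\eta})$ in $\mathcal{A}_{\nu}$, split $\Psi_{\nu}(\boldsymbol{x}_{k},\boldsymbol{\eta}_{k})-\Psi_{\nu}(\boldsymbol{x},\boldsymbol{\eta})=\int_{0}^{\cdot}\langle\boldsymbol{x}_{k}-\boldsymbol{x},d\boldsymbol{\eta}_{k}\rangle+\bigl(\Psi_{\nu}(\boldsymbol{x},\boldsymbol{\eta}_{k})-\Psi_{\nu}(\boldsymbol{x},\boldsymbol{\eta})\bigr)$; the first term has sup-norm at most $\nu\Vert\boldsymbol{x}_{k}-\boldsymbol{x}\Vert_{C([0,T];\mathbb{R}^{d})}\rightarrow0$. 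For the second term, use uniform continuity of $\boldsymbol{x}$: given $\varepsilon>0$, choose a partition $\pi=\{0=t_{0}<\dots<t_{\ell}=T\}$ of small enough mesh so that the step function $\boldsymbol{x}^{\pi}$ equal to $\boldsymbol{x}(t_{i-1})$ on $(t_{i-1},t_{i}]$ satisfies $\Vert\boldsymbol{x}-\boldsymbol{x}^{\pi}\Vert_{C([0,T];\mathbb{R}^{d})}\leq\varepsilon$; then $\Vert\Psi_{\nu}(\boldsymbol{x},\boldsymbol{\eta}_{k})-\Psi_{\nu}(\boldsymbol{x}^{\pi},\boldsymbol{\eta}_{k})\Vert_{C[0,T]}\leq\nu\varepsilon$ and $\Vert\Psi_{\nu}(\boldsymbol{x}^{\pi},\boldsymbol{\eta})-\Psi_{\nu}(\boldsymbol{x},\boldsymbol{\eta})\Vert_{C[0,T]}\leq\nu\varepsilon$ uniformly in $k$, while $\Psi_{\nu}(\boldsymbol{x}^{\pi},\boldsymbol{\eta}_{k})\rightarrow\Psi_{\nu}(\boldsymbol{x}^{\pi},\boldsymbol{\eta})$ in $C[0,T]$ because $\Psi_{\nu}(\boldsymbol{x}^{\pi},\cdot)(t)$ is a finite linear combination of increments of its argument and $\boldsymbol{\eta}_{k}\rightarrow\boldsymbol{\eta}$ uniformly; letting $k\rightarrow\infty$ and then $\varepsilon\rightarrow0$ finishes it. I expect this to be the main obstacle, because the estimates must be uniform in $t\in[0,T]$ while the integrand and the integrator vary simultaneously, and because the natural domain of continuity is a sublevel set of the merely lower semicontinuous $BV$-norm — which is exactly what necessitates the truncation below, the integral map being certainly not continuous on all of $C([0,T];\mathbb{R}^{d})^{2}$.

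To conclude, write $I_{n}:=\int_{0}^{\cdot}\langle\bar{X}_{n}(s),d\bar{H}_{n}(s)\rangle$ and $I:=\int_{0}^{\cdot}\langle\bar{X}(s),d\bar{H}(s)\rangle$, and let $f:C[0,T]\rightarrow\mathbb{R}$ be bounded and continuous. Fix $\varepsilon>0$ and, using (\ref{condition_bv_bounded}) and the bound on $\bar{H}$ above, pick $\nu>0$ with $\sup_{n\geq1}\bar{\mathbb{P}}\bigl(\Vert\bar{H}_{n}\Vert_{BV([0,T];\mathbb{R}^{d})}>\nu\bigr)<\varepsilon$ and $\bar{\mathbb{P}}\bigl(\Vert\bar{H}\Vert_{BV([0,T];\mathbb{R}^{d})}>\nu\bigr)<\varepsilon$. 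As $f\circ\Psi_{\nu}$ is bounded and continuous on the closed set $\mathcal{A}_{\nu}$, the Tietze extension theorem yields a bounded continuous $g:C([0,T];\mathbb{R}^{d})^{2}\rightarrow\mathbb{R}$ with $g=f\circ\Psi_{\nu}$ on $\mathcal{A}_{\nu}$ and $\Vert g\Vert_{\infty}\leq\Vert f\Vert_{\infty}$. On $\{\Vert\bar{H}_{n}\Vert_{BV([0,T];\mathbb{R}^{d})}\leq\nu\}$ one has $(\bar{X}_{n},\bar{H}_{n})\in\mathcal{A}_{\nu}$, so $f(I_{n})=g(\bar{X}_{n},\bar{H}_{n})$ there, and likewise $f(I)=g(\bar{X},\bar{H})$ on $\{\Vert\bar{H}\Vert_{BV([0,T];\mathbb{R}^{d})}\leq\nu\}$; hence
\[
\bigl|\bar{\mathbb{E}}\,f(I_{n})-\bar{\mathbb{E}}\,f(I)\bigr|\leq\bigl|\bar{\mathbb{E}}\,g(\bar{X}_{n},\bar{H}_{n})-\bar{\mathbb{E}}\,g(\bar{X},\bar{H})\bigr|+4\Vert f\Vert_{\infty}\varepsilon.
\]
The first term on the right tends to $0$ because $(\bar{X}_{n},\bar{H}_{n})\rightarrow(\bar{X},\bar{H})$ $\bar{\mathbb{P}}$-a.s. and $g$ is bounded continuous; letting $n\rightarrow\infty$ and then $\varepsilon\rightarrow0$ shows that $I_{n}$ converges in distribution in $C[0,T]$ to $I$. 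Finally, the map $(\boldsymbol{x},\boldsymbol{\eta})\mapsto\int_{0}^{\cdot}\langle\boldsymbol{x}(s),d\boldsymbol{\eta}(s)\rangle$ is Borel measurable on $C([0,T];\mathbb{R}^{d})\times\{\boldsymbol{\eta}\in C([0,T];\mathbb{R}^{d}):\mathrm{V}_{0}^{T}(\boldsymbol{\eta})<\infty\}$, so $I_{n}$ and $I$ have the same laws as $\int_{0}^{\cdot}\langle X_{n}(s),dH_{n}(s)\rangle$ and $\int_{0}^{\cdot}\langle X(s),dH(s)\rangle$, respectively, which is the claimed convergence in distribution.
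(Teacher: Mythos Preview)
Your proof is correct and follows essentially the same route as the paper: both arguments hinge on the deterministic Helly--Bray lemma (continuity of the Stieltjes integral on the closed sublevel sets $\mathcal{A}_\nu$), a Tietze extension of $f\circ\Psi_\nu$ to all of $C([0,T];\mathbb{R}^d)^2$, and the same truncation-in-$\nu$ estimate to pass from the extension back to the integral. The only difference is that you first invoke Skorokhod's representation to obtain almost-sure convergence, whereas the paper works directly with the laws $R_n\Rightarrow R_0$ and uses Portmanteau for the bounded-variation claim; since after Tietze you only need $\bar{\mathbb{E}}\,g(\bar X_n,\bar H_n)\to\bar{\mathbb{E}}\,g(\bar X,\bar H)$ for a bounded continuous $g$, the Skorokhod step is harmless but not actually needed.
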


As expected, the proof of this result uses a deterministic Helly-Bray type
theorem aiming uniform convergence. For the reader's convenience, we will
state and prove this result:

\begin{lemma}
\label{lemma}Let $\left(  \boldsymbol{x}_{n}\right)  _{n\geq1}\subseteq
C([0,T];\mathbb{R}^{d})$ and $\left(  \boldsymbol{\eta}_{n}\right)  _{n\geq
1}\subseteq BV([0,T];\mathbb{R}^{d})$ be two sequences of functions such that:

\begin{description}
\item[i)] $\boldsymbol{x}_{n}$ converges uniformly to a function
$\boldsymbol{x} \,\mathbf{\in} \,C([0,T];\mathbb{R}^{d})$;

\item[ii)] $\boldsymbol{\eta}_{n}$ converges uniformly to a function
$\boldsymbol{\eta}$;

\item[iii)] $\sup_{n\geq1}\left\Vert \boldsymbol{\eta}_{n}\right\Vert
_{BV([0,T];\mathbb{R}^{d})}<+\infty$.
\end{description}

Then $\boldsymbol{\eta}\in BV([0,T];\mathbb{R}^{d})$, $\left\Vert
\boldsymbol{\eta}\right\Vert _{BV([0,T];\mathbb{R}^{d})}\leq\liminf
_{n\rightarrow\infty}\left\Vert \boldsymbol{\eta}_{n}\right\Vert
_{BV([0,T];\mathbb{R}^{d})}$ and the sequence of continuous functions $\left(
\int_{0}^{\cdot}\left\langle \boldsymbol{x}_{n}(s),d\boldsymbol{\eta}%
_{n}(s)\right\rangle \right)  _{n\geq1}$ converges uniformly to $\int%
_{0}^{\cdot}\left\langle \boldsymbol{x}(s),d\boldsymbol{\eta}(s)\right\rangle
$.
\end{lemma}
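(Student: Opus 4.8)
The plan is to prove the three assertions in sequence, dealing first with the lower semicontinuity of total variation, then upgrading pointwise convergence of the integrals to uniform convergence.

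\textbf{Step 1: $\boldsymbol{\eta}\in BV$ and the $\liminf$ inequality.} Fix any partition $0=t_0<t_1<\dots<t_k=T$. For each $i$, $\left\vert\boldsymbol{\eta}(t_i)-\boldsymbol{\eta}(t_{i-1})\right\vert=\lim_{n\to\infty}\left\vert\boldsymbol{\eta}_n(t_i)-\boldsymbol{\eta}_n(t_{i-1})\right\vert$ by (ii), so summing over $i$ gives $\sum_{i=1}^k\left\vert\boldsymbol{\eta}(t_i)-\boldsymbol{\eta}(t_{i-1})\right\vert=\lim_{n\to\infty}\sum_{i=1}^k\left\vert\boldsymbol{\eta}_n(t_i)-\boldsymbol{\eta}_n(t_{i-1})\right\vert\leq\liminf_{n\to\infty}\mathrm{V}_0^T(\boldsymbol{\eta}_n)$. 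Taking the supremum over all partitions yields $\mathrm{V}_0^T(\boldsymbol{\eta})\leq\liminf_{n\to\infty}\mathrm{V}_0^T(\boldsymbol{\eta}_n)<+\infty$ by (iii), and since $\left\vert\boldsymbol{\eta}(0)\right\vert=\lim_n\left\vert\boldsymbol{\eta}_n(0)\right\vert$ the norm inequality follows as well. (One should also note $\boldsymbol{\eta}$ is right-continuous, being a uniform limit of right-continuous functions.)

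\textbf{Step 2: pointwise convergence of the Stieltjes integrals.} Write $M:=\sup_{n\geq1}\left\Vert\boldsymbol{\eta}_n\right\Vert_{BV}<+\infty$ and also $\left\Vert\boldsymbol{\eta}\right\Vert_{BV}\leq M$ by Step 1. For a fixed $t\in[0,T]$, decompose
\[
\int_0^t\left\langle\boldsymbol{x}_n(s),d\boldsymbol{\eta}_n(s)\right\rangle-\int_0^t\left\langle\boldsymbol{x}(s),d\boldsymbol{\eta}(s)\right\rangle=\int_0^t\left\langle\boldsymbol{x}_n(s)-\boldsymbol{x}(s),d\boldsymbol{\eta}_n(s)\right\rangle+\left(\int_0^t\left\langle\boldsymbol{x}(s),d\boldsymbol{\eta}_n(s)\right\rangle-\int_0^t\left\langle\boldsymbol{x}(s),d\boldsymbol{\eta}(s)\right\rangle\right).
\]
The first term is bounded by $M\left\Vert\boldsymbol{x}_n-\boldsymbol{x}\right\Vert_{C([0,T])}\to0$ by (i) and (iii). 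For the second term I would first handle the case $\boldsymbol{x}=\boldsymbol{p}$ a polynomial (or just a $C^1$ function): then integration by parts turns $\int_0^t\left\langle\boldsymbol{p}(s),d\boldsymbol{\eta}_n(s)\right\rangle$ into boundary terms plus $-\int_0^t\left\langle\boldsymbol{p}'(s),\boldsymbol{\eta}_n(s)\right\rangle ds$, which converges to the corresponding expression for $\boldsymbol{\eta}$ by (ii) and dominated convergence; the boundary terms converge by (ii). For general continuous $\boldsymbol{x}$, approximate uniformly by $\boldsymbol{p}$ with $\left\Vert\boldsymbol{x}-\boldsymbol{p}\right\Vert_{C([0,T])}<\varepsilon$ (Weierstrass/Stone), and use the uniform bound $M$ on both $\boldsymbol{\eta}_n$ and $\boldsymbol{\eta}$ to control $\left\vert\int_0^t\left\langle\boldsymbol{x}-\boldsymbol{p},d(\boldsymbol{\eta}_n-\boldsymbol{\eta})\right\rangle\right\vert\leq2M\varepsilon$; a standard $3\varepsilon$ argument closes the pointwise convergence.

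\textbf{Step 3: from pointwise to uniform, via equicontinuity.} Denote $\boldsymbol{u}_n(t):=\int_0^t\left\langle\boldsymbol{x}_n(s),d\boldsymbol{\eta}_n(s)\right\rangle$ and $\boldsymbol{u}(t):=\int_0^t\left\langle\boldsymbol{x}(s),d\boldsymbol{\eta}(s)\right\rangle$. For $s<t$, $\left\vert\boldsymbol{u}_n(t)-\boldsymbol{u}_n(s)\right\vert\leq\left(\sup_{r}\left\vert\boldsymbol{x}_n(r)\right\vert\right)\mathrm{V}_s^t(\boldsymbol{\eta}_n)$. The sequence $(\boldsymbol{x}_n)$ is uniformly bounded (it converges uniformly), so it suffices that the variations $\mathrm{V}_s^t(\boldsymbol{\eta}_n)$ are uniformly small when $t-s$ is small; equivalently, that the measures $\left\vert d\boldsymbol{\eta}_n\right\vert$ are uniformly non-atomic / tight on $[0,T]$. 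Here lies the main obstacle: uniform convergence of $\boldsymbol{\eta}_n$ plus a uniform bound on total variation does \emph{not} by itself give equicontinuity of the indefinite integrals (the mass of $\left\vert d\boldsymbol{\eta}_n\right\vert$ could concentrate). The resolution is to argue by contradiction using the family $\left\{\left\vert d\boldsymbol{\eta}_n\right\vert\right\}_n$ of finite measures with total mass $\leq M$: by weak-* compactness a subsequence converges to some finite measure $\mu$; one shows $\mu$ has no atoms because $\boldsymbol{\eta}$ is continuous (a jump of the limiting variation at a point would force, via Step 1 applied on small intervals around that point together with the uniform convergence (ii), a corresponding oscillation of $\boldsymbol{\eta}$ there), whence uniform equicontinuity of the $\boldsymbol{u}_n$ follows on the subsequence; since every subsequence has such a further subsequence, the whole family $(\boldsymbol{u}_n)$ is equicontinuous. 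Combined with the pointwise convergence from Step 2 and the compactness of $[0,T]$, the Arzelà–Ascoli argument upgrades pointwise to uniform convergence $\boldsymbol{u}_n\to\boldsymbol{u}$ in $C([0,T];\mathbb{R}^d)$, which is the claim. (An alternative, perhaps cleaner, route avoiding the measure-theoretic contradiction is to prove uniform convergence directly: bound $\sup_t\left\vert\boldsymbol{u}_n(t)-\boldsymbol{u}(t)\right\vert$ by splitting as in Step 2, reduce the hard term to a polynomial integrand where integration by parts gives $\sup_t\left\vert\int_0^t\left\langle\boldsymbol{p}'(s),\boldsymbol{\eta}_n(s)-\boldsymbol{\eta}(s)\right\rangle ds\right\vert\leq T\left\Vert\boldsymbol{p}'\right\Vert_\infty\left\Vert\boldsymbol{\eta}_n-\boldsymbol{\eta}\right\Vert_\infty\to0$ uniformly in $t$, and control the polynomial-approximation error uniformly by $2M\varepsilon$; this sidesteps equicontinuity entirely and is the approach I would actually write up.)
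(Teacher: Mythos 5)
Your concluding parenthetical in Step 3 --- the ``alternative, perhaps cleaner, route'' --- is correct and is essentially the paper's argument. The paper splits the difference of integrals into $\int_0^t\langle \boldsymbol{x}_n-\boldsymbol{x},d\boldsymbol{\eta}_n\rangle$ plus two terms obtained by replacing $\boldsymbol{x}$ with a step-function approximant $\boldsymbol{x}^N$ built on a partition: the term $\int_0^t\langle \boldsymbol{x}^N,d(\boldsymbol{\eta}_n-\boldsymbol{\eta})\rangle$ becomes a finite sum controlled by $\left\Vert\boldsymbol{\eta}_n-\boldsymbol{\eta}\right\Vert_{C}$ (hence tends to $0$ uniformly in $t$ for fixed $N$), while the approximation error is bounded by $2M\left\Vert\boldsymbol{x}^N-\boldsymbol{x}\right\Vert_{C}$ uniformly in $t$ and $n$. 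Your $C^1$/polynomial approximant with integration by parts plays exactly the same role and likewise yields uniform-in-$t$ control directly, so your Steps 2 and 3 collapse into a single estimate; the only extra care your version requires is justifying integration by parts when $\boldsymbol{\eta}_n$ is merely right-continuous of bounded variation (fine here, since the other factor is $C^1$), which the step-function route avoids.

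The argument you present as the \emph{main} body of Step 3, however, contains a genuine error, and you should not leave it as a fallback. It is not true that the weak-$*$ limit $\mu$ of $\left\vert d\boldsymbol{\eta}_n\right\vert$ must be non-atomic because $\boldsymbol{\eta}$ is continuous, nor that $\mathrm{V}_s^t(\boldsymbol{\eta}_n)$ is uniformly small when $t-s$ is small. Take $d=1$ and let $\boldsymbol{\eta}_n$ consist of $n$ triangular tents of height $1/n$ packed into $[\tfrac12-\tfrac1n,\tfrac12+\tfrac1n]$, with $\boldsymbol{\eta}_n\equiv 0$ elsewhere: then $\left\Vert\boldsymbol{\eta}_n\right\Vert_{C}=1/n\rightarrow 0$ (so $\boldsymbol{\eta}=0$ is continuous), $\mathrm{V}_0^T(\boldsymbol{\eta}_n)=2$ for all $n$, yet $\left\vert d\boldsymbol{\eta}_n\right\vert\rightarrow 2\delta_{1/2}$ and $\mathrm{V}_{1/2-1/n}^{1/2+1/n}(\boldsymbol{\eta}_n)=2$ does not vanish. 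So the mechanism ``continuity of $\boldsymbol{\eta}$ forces non-atomicity, hence equicontinuity of the $\boldsymbol{u}_n$ via the crude bound $\left\vert\boldsymbol{u}_n(t)-\boldsymbol{u}_n(s)\right\vert\leq\left\Vert\boldsymbol{x}_n\right\Vert_{C}\mathrm{V}_s^t(\boldsymbol{\eta}_n)$'' breaks down (the conclusion of the lemma still holds in this example, but only because of cancellations your bound throws away). Write up the direct estimate; drop the Arzel\`a--Ascoli detour.
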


\begin{proof}
The first two assertions are well-known, so we skip their proof.

Let us prove the last one. We say that a tuple $\pi=(t_{0},\dots,t_{k})$ is a
\emph{partition} of $[0,T]$ if $0=t_{0}<t_{1}<\dots<t_{k_{N}}=T$.

We consider $\pi^{N}=(t_{0}^{N},\dots,t_{k_{N}}^{N})$, $N\in\mathbb{N}^{\ast}$
partitions of the interval $[0,T]$ such that%
\[
\lim_{N\rightarrow\infty}\sup_{0\leq i<t_{k_{N}}^{N}}\left\vert t_{i+1}%
^{N}-t_{i}^{N}\right\vert =0.
\]
Let $\boldsymbol{x}^{N}:[0,T]\rightarrow\mathbb{R}^{d}$ be a step-function
approximating $\boldsymbol{x}$, defined by%
\[
\boldsymbol{x}^{N}:=\mathbf{1}_{\{0\}}\boldsymbol{x}(0)+\sum\nolimits_{i=1}%
^{k_{N}}\mathbf{1}_{(t_{i-1},t_{i}]}\boldsymbol{x}(t_{i}).
\]
Let $M:=\sup_{n\geq1}\left\Vert \boldsymbol{\eta}_{n}\right\Vert
_{BV([0,T];\mathbb{R}^{d})}$. Then%
\[%
\begin{array}
[c]{l}%
\displaystyle\left\vert \int_{0}^{t}\left\langle \boldsymbol{x}_{n}%
(s),d\boldsymbol{\eta}_{n}(s)\right\rangle -\int_{0}^{t}\left\langle
\boldsymbol{x}(s),d\boldsymbol{\eta}(s)\right\rangle \right\vert
\leq\left\vert \int_{0}^{t}\left\langle \boldsymbol{x}_{n}(s)-\boldsymbol{x}%
(s),d\boldsymbol{\eta}_{n}(s)\right\rangle \right\vert \medskip\\
\displaystyle\quad+\left\vert \int_{0}^{t}\left\langle \boldsymbol{x}%
(s)-\boldsymbol{x}^{N}(s),d(\boldsymbol{\eta}_{n}-\boldsymbol{\eta
})(s)\right\rangle \right\vert +\left\vert \int_{0}^{t}\left\langle
\boldsymbol{x}^{N}(s),d(\boldsymbol{\eta}_{n}-\boldsymbol{\eta}%
)(s)\right\rangle \right\vert \medskip\\
\displaystyle\leq\left\Vert \boldsymbol{x}_{n}-\boldsymbol{x}\right\Vert
_{C([0,T];\mathbb{R}^{d})}\mathrm{V}_{0}^{T}(\boldsymbol{\eta}_{n})+\left\Vert
\boldsymbol{x}^{N}-\boldsymbol{x}\right\Vert _{C([0,T];\mathbb{R}^{d})}\left(
\mathrm{V}_{0}^{T}(\boldsymbol{\eta}_{n})+\mathrm{V}_{0}^{T}(\boldsymbol{\eta
})\right)  \medskip\\
\displaystyle\quad+\sum\nolimits_{i=1}^{k_{N}}\left\vert \boldsymbol{x}%
(t_{i})\right\vert \cdot\left\vert (\boldsymbol{\eta}_{n}-\boldsymbol{\eta
})(t_{i}\wedge t)-(\boldsymbol{\eta}_{n}-\boldsymbol{\eta})(t_{i-1}\wedge
t)\right\vert .
\end{array}
\]
Therefore,%
\[%
\begin{array}
[c]{l}%
\displaystyle\sup_{t\in\lbrack0,T]}\left\vert \int_{0}^{t}\left\langle
\boldsymbol{x}_{n}(s),d\boldsymbol{\eta}_{n}(s)\right\rangle -\int_{0}%
^{t}\left\langle \boldsymbol{x}(s),d\boldsymbol{\eta}(s)\right\rangle
\right\vert \leq M\left\Vert \boldsymbol{x}_{n}-\boldsymbol{x}\right\Vert
_{C([0,T];\mathbb{R}^{d})}\medskip\\
\displaystyle\quad+2M\left\Vert \boldsymbol{x}^{N}-\boldsymbol{x}\right\Vert
_{C([0,T];\mathbb{R}^{d})}+2\left(  \sum\nolimits_{i=1}^{k_{N}}\left\vert
\boldsymbol{x}(t_{i})\right\vert \right)  \left\Vert \boldsymbol{\eta}%
^{n}-\boldsymbol{\eta}\right\Vert _{C([0,T];\mathbb{R}^{d})}.
\end{array}
\]
It follows that%
\[
\limsup_{n\rightarrow\infty}\sup_{t\in\lbrack0,T]}\left\vert \int_{0}%
^{t}\left\langle \boldsymbol{x}_{n}(s),d\boldsymbol{\eta}_{n}(s)\right\rangle
-\int_{0}^{t}\left\langle \boldsymbol{x}(s),d\boldsymbol{\eta}(s)\right\rangle
\right\vert \leq2M\left\Vert \boldsymbol{x}^{N}-\boldsymbol{x}\right\Vert
_{C([0,T];\mathbb{R}^{d})}.
\]

Since $\lim_{N\rightarrow\infty}\left\Vert \boldsymbol{x}^{N}-\boldsymbol{x}%
\right\Vert _{C([0,T];\mathbb{R}^{d})}=0$, we finally get%
\[
\lim_{n\rightarrow\infty}\sup_{t\in\lbrack0,T]}\left\vert \int_{0}%
^{t}\left\langle \boldsymbol{x}_{n}(s),d\boldsymbol{\eta}_{n}(s)\right\rangle
-\int_{0}^{t}\left\langle \boldsymbol{x}(s),d\boldsymbol{\eta}(s)\right\rangle
\right\vert =0.
\]

\end{proof}

Let us now proceed with the proof of the main result of this section, which
follows the same steps as that of Proposition 3.4 from \cite{za/08}.

\begin{proof}
[Proof of the proposition \ref{prop1}]Let $\mathbf{W}:=C([0,T];\mathbb{R}%
^{d})$, $\mathbf{V}:=C([0,T];\mathbb{R}^{d})\cap BV([0,T];\mathbb{R}^{d})$
and, for $\nu>0$,%
\[
\mathbf{V}_{\nu}:=\left\{  \boldsymbol{\eta}\in\mathbf{V}\mid\left\Vert
\boldsymbol{\eta}\right\Vert _{BV([0,T];\mathbb{R}^{d})}\leq\nu\right\}  .
\]
By the first part of Lemma \ref{lemma}, $\mathbf{V}_{\nu}$ is a closed subset
of the Banach space $\mathbf{W}$.

Let us consider the function $\Lambda:\mathbf{W}\times\mathbf{W}%
\rightarrow\mathbf{W}$ defined by%
\[
\Lambda(\boldsymbol{x},\boldsymbol{\eta})(t):=\left\{
\begin{array}
[c]{ll}%
\int_{0}^{t}\left\langle \boldsymbol{x}(s),d\boldsymbol{\eta}(s)\right\rangle
, & (x,\boldsymbol{\eta})\in\mathbf{W}\times\mathbf{V};\\
0, & (x,\boldsymbol{\eta})\in\mathbf{W}\times(\mathbf{W}\setminus\mathbf{V}).
\end{array}
\right.
\]
By the last conclusion of Lemma \ref{lemma}, the restriction $\Lambda%
\vert\big.%
_{\mathbf{W}\times\mathbf{V}_{\nu}}$ is continuous.

Let now $R_{n}:=\mathbb{P}^{n}\circ(X^{n},H^{n})^{-1}$ and $R_{0}%
:=\mathbb{P}\circ(X,H)^{-1}$, the distribution probabilities of $(X^{n}%
,H^{n})$, respectively $(X,H)$. By the assumptions of the theorem, $\left(
R_{n}\right)  _{n\geq1}$ converges weakly to $R_{0}$, \textit{i.e.}%
\begin{equation}
\lim_{n\rightarrow\infty}\int_{\mathbf{W}\times\mathbf{W}}\Phi\left(
\boldsymbol{x},\boldsymbol{\eta}\right)  R_{n}(d\boldsymbol{x}%
,d\boldsymbol{\eta})=\int_{\mathbf{W}\times\mathbf{W}}\Phi\left(
\boldsymbol{x},\boldsymbol{\eta}\right)  R_{0}(d\boldsymbol{x}%
,d\boldsymbol{\eta}), \label{conv_law}%
\end{equation}
for every bounded continuous functional $\Phi:\mathbf{W}\times\mathbf{W}%
\rightarrow\mathbb{R}$.

First of all, by Portmanteau lemma,%
\[
\limsup_{n\rightarrow\infty}R_{n}\left(  \mathbf{W}\times\mathbf{V}_{\nu
}\right)  \leq R_{0}\left(  \mathbf{W}\times\mathbf{V}_{\nu}\right)
,\ \forall\nu>0.
\]
Since, by condition (\ref{condition_bv_bounded}),%
\begin{equation}
\lim_{\nu\rightarrow+\infty}\inf_{n\geq1}R_{n}\left(  \mathbf{W}%
\times\mathbf{V}_{\nu}\right)  =1, \label{condition_bv_bounded2}%
\end{equation}
we get $\lim_{\nu\rightarrow+\infty}R_{0}\left(  \mathbf{W}\times
\mathbf{V}_{\nu}\right)  =1$, \textit{i.e.} $R_{0}\left(  \mathbf{W}%
\times\mathbf{V}\right)  =1$, meaning that $H$ is $\mathbb{P}$-a.s. of bounded variation.

Let now $\phi:C[0,T]\rightarrow\mathbb{R}$ be an arbitrary bounded continuous
functional. It remains to prove that $\lim_{n\rightarrow\infty}\mathbb{E}%
\phi\left(  \Lambda(X^{n},H^{n})\right)  =\mathbb{E}\phi\left(  \Lambda
(X,H)\right)  $, which can be written as%
\[
\lim_{n\rightarrow\infty}\int_{\mathbf{W}\times\mathbf{W}}(\phi\circ
\Lambda)dR_{n}=\int_{\mathbf{W}\times\mathbf{W}}(\phi\circ\Lambda)dR_{0}.
\]
Since $\phi\circ\Lambda%
\vert\big.%
_{\mathbf{W}\times\mathbf{V}_{\nu}}$ is bounded and continous, it can be
extended to a continuous functional $\Phi_{\nu}:\mathbf{W}\times
\mathbf{W}\rightarrow\mathbb{R}$, bounded by $M:=\sup_{\mathbf{z}\in
C[0,T]}\phi(\mathbf{z})$; hence, by (\ref{conv_law}),%
\[
\lim_{n\rightarrow\infty}\int_{\mathbf{W}\times\mathbf{W}}\Phi_{\nu}\left(
\boldsymbol{x},\boldsymbol{\eta}\right)  R_{n}(d\boldsymbol{x}%
,d\boldsymbol{\eta})=\int_{\mathbf{W}\times\mathbf{W}}\Phi_{\nu}\left(
\boldsymbol{x},\boldsymbol{\eta}\right)  R_{0}(d\boldsymbol{x}%
,d\boldsymbol{\eta}).
\]
Let us estimate the term%
\[
T_{n,\nu}:=\left\vert \int_{\mathbf{W}\times\mathbf{W}}\left(  \Phi_{\nu
}\left(  \boldsymbol{x},\boldsymbol{\eta}\right)  -\phi\circ\Lambda\right)
R_{n}(d\boldsymbol{x},d\boldsymbol{\eta})\right\vert ,
\]
for $n\in\mathbb{N}$ (including then the case $n=0$). We have%
\begin{align*}
T_{n,\nu}  &  \leq\int_{\mathbf{W}\times\mathbf{W}}\left\vert \Phi_{\nu
}\left(  \boldsymbol{x},\boldsymbol{\eta}\right)  -\phi\circ\Lambda\right\vert
R_{n}(d\boldsymbol{x},d\boldsymbol{\eta})=\int_{\mathbf{W}\times
(\mathbf{W\setminus V}_{\nu})}\left\vert \Phi_{\nu}\left(  \boldsymbol{x}%
,\boldsymbol{\eta}\right)  -\phi\circ\Lambda\right\vert R_{n}(d\boldsymbol{x}%
,d\boldsymbol{\eta})\\
&  \leq2MR_{n}\left(  \mathbf{W}\times(\mathbf{W\setminus V}_{\nu})\right)
=2M\left(  1-R_{n}\left(  \mathbf{W}\times\mathbf{V}_{\nu}\right)  \right)  .
\end{align*}
Hence, by (\ref{condition_bv_bounded2}) and its consequence%
\[
\lim_{\nu\rightarrow+\infty}\sup_{n\geq0}T_{n,\nu}=0.
\]
Finally, for all $n\geq1$ and $\nu>0$,%
\begin{multline*}
\left\vert \int_{\mathbf{W}\times\mathbf{W}}(\phi\circ\Lambda)dR_{n}%
-\int_{\mathbf{W}\times\mathbf{W}}(\phi\circ\Lambda)dR_{0}\right\vert \\
\leq\left\vert \int_{\mathbf{W}\times\mathbf{W}}\Phi_{\nu}\left(
\boldsymbol{x},\boldsymbol{\eta}\right)  R_{n}(d\boldsymbol{x}%
,d\boldsymbol{\eta})-\int_{\mathbf{W}\times\mathbf{W}}\Phi_{\nu}\left(
\boldsymbol{x},\boldsymbol{\eta}\right)  R_{0}(d\boldsymbol{x}%
,d\boldsymbol{\eta})\right\vert +T_{n,\nu}+T_{0,\nu}%
\end{multline*}
and therefore%
\[
\limsup_{n\rightarrow\infty}\left\vert \int_{\mathbf{W}\times\mathbf{W}}%
(\phi\circ\Lambda)dR_{n}-\int_{\mathbf{W}\times\mathbf{W}}(\phi\circ
\Lambda)dR_{0}\right\vert \leq2\sup_{n\geq0}T_{n,\nu},\ \forall\nu>0
\]
which, by passing to the limit as $\nu\rightarrow0$, yields the desired conclusion.
\end{proof}

\addcontentsline{toc}{section}{References}

\end{document}